\newcommand{\ltriangle}[4][]%
{\begin{diagram}[#1]%
	{#2} &\rTo& {#3} &\rTo& {#4} &\rTo& {#2 \decal{1}}%
\end{diagram}}
\newcommand{\tensor}{\otimes}
\newcommand{\Hom}[3][]{\operatorname{Hom}_{#1}(#2,#3)}
\newcommand{\ZZ}{\mathbb{Z}}
\newcommand{\QQ}{\mathbb{Q}}
\newcommand{\HH}{\mathbb{H}}
\newcommand{\PPn}[1]{\mathbb{P}^{#1}}
\newtheorem*{lem*}{Lemma}
\newtheorem*{thm*}{Theorem}
\newtheorem*{prop*}{Proposition}
\theoremstyle{definition}
\theoremstyle{remark}
\newtheorem*{ex*}{Example}
\newtheorem*{note}{Note}
\theoremstyle{plain}
\DeclareMathOperator{\rk}{rk}
\DeclareMathOperator{\Spec}{Spec}
\DeclareMathOperator{\Supp}{Supp}
\newcommand{\define}[1]{\emph{#1}}
\newcommand{\OB}{\mathcal{O}_B}
\newcommand{\shf}[1]{\mathscr{#1}}
\newcommand{\OX}{\shf{O}_X}
\newcommand{\argbl}{\,\_\!\_\!\_\,}
\def\overbar#1#2#3{{%
	\setbox0=\hbox{\displaystyle{#1}}%
	\dimen0=\wd0
	\advance\dimen0 by -#2 
	\vbox {\nointerlineskip \moveright #3 \vbox{\hrule height 0.3pt width \dimen0}%
		\nointerlineskip \vskip 1.5pt \box0}%
}}
\newcommand{\shHom}{\mathscr{H}\hspace{-2.7pt}\mathit{om}}
\newcommand{\shExt}{\mathscr{E}\hspace{-1.5pt}\mathit{xt}}
\renewcommand{\Hom}{\operatorname{Hom}}
\newcommand{\rToDots}{\rTo[l>=3.3em]~{\raisebox{0.1pt}{$\,\dotsb\,$}}}
\newcommand{\PP}{\mathbb{P}}
\newcommand{\famX}{\mathfrak{Y}}
\newcommand{\famZ}{\mathfrak{Z}}
\DeclareMathOperator{\codim}{codim}
\newcommand{\PProj}{\operatorname{Proj}}
\newcommand{\SSpec}{\operatorname{Spec}}
\newcommand{\Sym}{\operatorname{Sym}}
\newcommand{\Gr}{\operatorname{Gr}}
\newcommand{\OmX}[1]{\Omega_X^{#1}}
\newcommand{\OP}{\shf{O}_P}
\renewcommand{\OB}{\shf{O}_B}
\newcommand{\shH}{\shf{H}}
\newcommand{\shC}{\shf{C}}
\DeclareMathOperator{\DR}{DR}
\DeclareMathOperator{\Spenc}{\widetilde{\mathrm{DR}}}
\newcommand{\van}{_\mathit{van}}
\newcommand{\Mvan}{\Mmod \van}
\newcommand{\HMvan}{M \van}
\newcommand{\jl}{j_{\ast}}
\newcommand{\ql}{q_{\ast}}
\newcommand{\fl}{f_{\ast}}
\newcommand{\ju}{j^{\ast}}
\newcommand{\qu}{q^{\ast}}
\newcommand{\piu}{\pi^{\ast}}
\newcommand{\psiu}{\psi^{\ast}}
\newcommand{\pil}{\pi_{\ast}}
\newcommand{\QfamXH}{\QQ_{\famX}^H}
\newcommand{\decal}[1]{\lbrack #1 \rbrack}
\newcommand{\dfamX}{d_{\famX}}
\newcommand{\DDX}{\mathbb{D}_X}
\newcommand{\Dmod}{\mathcal{D}}
\newcommand{\Mmod}{\mathcal{M}}
\newcommand{\Nmod}{\mathcal{N}}
\let\@@seccntformat\@seccntformat
\renewcommand*{\@seccntformat}[1]{%
  \expandafter\ifx\csname @seccntformat@#1\endcsname\relax
    \expandafter\@@seccntformat
  \else
    \expandafter
      \csname @seccntformat@#1\expandafter\endcsname
  \fi
    {#1}%
}
\newcommand*{\@seccntformat@subsection}[1]{%
  \textbf{\csname the#1\endcsname.}
}
\newcommand{\subsecref}[1]{\S\ref{#1}}
\newcommand{\subsecsref}[2]{\S\ref{#1}--\ref{#2}}
\newtheorem{theorem}[equation]{Theorem}
\newtheorem*{theorem*}{Theorem}
\newtheorem{lemma}[equation]{Lemma}
\newtheorem{proposition}[equation]{Proposition}
\theoremstyle{definition}
\theoremstyle{remark}
\theoremstyle{plain}
\newcommand{\shE}{\shf{E}}
\newcommand{\shO}{\shf{O}}
\newcommand{\shF}{\shf{F}}
\newcommand{\shB}{\shf{B}}
\newcommand{\derR}{\mathbb{R}}
\newcommand{\derL}{\mathbb{L}}
\newcommand{\Gammast}{\Gamma_{\ast}}
\newcommand{\tensorL}{\underline{\tensor}}
\let\old@caption\caption
\renewcommand*{\caption}[1]{%
	\setcounter{figure}{\value{equation}}%
	\stepcounter{equation}%
	\old@caption{#1}\relax%
}
\newcommand{\shEdual}{\shE^{\vee}}
\newcommand{\shGd}{\setbox0=\hbox{$\shG$} \shG\hspace{-0.9\wd0}\widehat{\phantom{G}}}
\newcommand{\shGrd}{\widehat{\shG^{\mathrm{r}}}}
\newcommand{\shGr}{\shG^{\mathrm{r}}}
\newcommand{\shSr}{\shS^{\mathrm{r}}}
\newcommand{\shG}{\shf{G}}
\newcommand{\shS}{\shf{S}}
\newcommand{\shOE}{\shf{O}_E}
\newcommand{\omegaS}{\omega_{\shS}}
\newcommand{\omegaX}{\omega_X}
\renewcommand{\argbl}{-}
\newcommand{\LshO}{L_{\shO}}
\newcommand{\LQQ}{L_{\QQ}}
\newcommand{\Db}{\operatorname{D}^{\mathrm{b}}}
\newcommand{\Dbqc}{\operatorname{D}_{\mathrm{qc}}^{\mathrm{b}}}
\newcommand{\Dbqcgr}{\operatorname{D}_{\mathrm{qc,gr}}^{\mathrm{b}}}
\newcommand{\Dbgr}{\operatorname{D}_{\mathrm{gr}}^{\mathrm{b}}}
\newcommand{\op}{^{\mathrm{op}}}
\begin{document}

\title{Local duality and mixed Hodge modules}
\author[C.~Schnell]{Christian Schnell}
\address{Department of Mathematics, Statistics \& Computer Science \\
University of Illinois at Chicago \\
851 South Morgan Street \\
Chicago, IL 60607}
\email{cschnell@math.uic.edu}
\subjclass[2000]{32C38, 14D07, 14B15}
\keywords{Mixed Hodge module, Local duality, Cohen-Macaulay property, Holonomicity,
Characteristic variety, De Rham complex}
\begin{abstract}
We establish a relationship between the graded quotients of a filtered
holonomic $\Dmod$-module, their sheaf-theoretic duals, and the characteristic
variety, in case the filtered $\Dmod$-module underlies a polarized Hodge module on a
smooth algebraic variety. The proof is based on Saito's result that the associated
graded module is Cohen-Macaulay, and on local duality on the cotangent bundle.
\end{abstract}
\maketitle


\section{Introduction}

In this paper, we prove a kind of duality theorem for filtered $\Dmod$-modules which
underlie polarized Hodge modules. Let $(\Mmod, F)$ be such a filtered $\Dmod$-module;
we show that there is a close relationship between, (1) the graded quotients
$\Gr_k^F \! \Mmod$, (2) their sheaf-theoretic duals, and (3) the characteristic variety
of $\Mmod$.

\subsection{Background}
\label{subsec:background}

This general result has its origins in the author's Ph.D.~dissertation
\cite{Schnell}, where the vanishing cohomology in the family of hyperplane sections
of a smooth projective variety was studied via residues. Very briefly, the situation
considered there is the following: Say $Y$ is a smooth projective variety of
dimension $n+1$; let $B$ be the space of hyperplane sections of $Y$, for an embedding
into projective space of sufficiently high degree. (The choice of symbols here is
awkward, but is to avoid confusion with the notation used in the remainder of the paper.) 
The universal family $\pi \colon \famX \to B$ has a smooth total space of dimension $\dfamX = d +
n$, where $d = \dim B$. On the open subset of $B$ corresponding to smooth hyperplane
sections, the vanishing cohomology of the fibers defines a polarized variation of
Hodge structure of weight $n$. Using M.~Saito's theory \cites{SaitoHM,SaitoMHM}, it
extends in a natural way to a polarized Hodge module $\HMvan$ on all of $B$; in fact,
as proved in \cite{BFNP}, $\HMvan$ is a direct summand
in the decomposition of $H^0 \pil \QfamXH \decal{\dfamX}$.

Now let $\bigl( \Mvan, F \bigr)$ be the filtered left $\Dmod$-module underlying
$\HMvan$. As usual, the finitely generated graded $\Sym \Theta_B$-module
\begin{equation} \label{eq:GrMvan}
	\Gr^F \! \Mvan = \bigoplus_{k \in \ZZ} \Gr_k^F \! \Mvan
\end{equation}
defines a coherent sheaf on the cotangent bundle of $B$, whose support is the
characteristic variety of $\Mvan$. In this geometric setting, it can be shown that
the projectivization of the characteristic variety is isomorphic to a subvariety
$\famZ \subseteq \famX$, whose points are the singular points in the fibers of
$\pi$. The fact that $\famZ$ is itself smooth then leads to the following
relationship between the sheaves $\shG_k = \Gr_k^F \! \Mvan$, their duals, and the characteristic
variety.

\begin{theorem*}
Let $\shC \van$ be the coherent sheaf on the projectivized cotangent bundle $P = \PP
\bigl( \Theta_B \bigr)$ associated to \eqref{eq:GrMvan}. Then for every $k \in
\ZZ$, there is an exact sequence
\begin{diagram}
	\shHom \bigl( \shG_{-n-k}, \OB \bigr) &\rIntoBold&
	\shG_k &\rTo& \pil \bigl( \shC \van \tensor \OP(k) \bigr)
	&\rOnto& \shExt^1 \bigl( \shG_{-n-k}, \OB \bigr);
\end{diagram}
up to a sign, the first map is given by $(2 \pi i)^{-n}$ times the intersection
pairing on the smooth fibers of $\pi$. For $i \geq 2$, we similarly have isomorphisms
\[
	R^{i-1} \pil \bigl( \shC \van \tensor \OP(k) \bigr) \simeq 
		\shExt^i \bigl( \shG_{-n-k}, \OB \bigr),
\]
again valid for every $k \in \ZZ$.
\end{theorem*}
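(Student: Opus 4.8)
The plan is to deduce the theorem from three ingredients: the relative ``$\operatorname{Proj}$ versus local cohomology'' exact sequence on the cotangent bundle, Saito's Cohen--Macaulay theorem, and relative local duality on $T^{\ast}B$ combined with the self-duality coming from the polarization. Throughout, write $S = \Sym \Theta_B$ for the relevant sheaf of graded $\OB$-algebras, $N = \Gr^F \! \Mvan = \bigoplus_k \shG_k$ for the associated finitely generated graded $S$-module, $\widetilde{N}$ for the corresponding coherent sheaf on $T^{\ast}B = \SSpec_B S$, and $\mathfrak{m} = S_{>0}$ for the irrelevant ideal (the ideal of the zero section); the restriction $\widetilde{N}|_{T^{\ast}B \setminus B}$ descends, along the $\CC^{\ast}$-bundle $T^{\ast}B \setminus B \to P$, to $\bigoplus_{k} \shC\van \tensor \OP(k)$, with $\pi$ now denoting the projection $P \to B$.

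First I would set up the local cohomology exact sequence. Pushing the long exact sequence of the pair $\bigl( T^{\ast}B, B \bigr)$ for $\widetilde{N}$ forward along the affine morphism $T^{\ast}B \to B$ — so that $\widetilde{N}$ itself contributes no higher direct image and $\bigoplus_k \shG_k$ as its zeroth — and using that the complement of the zero section is a $\CC^{\ast}$-bundle over $P$, then decomposing into weight spaces for the fibrewise $\CC^{\ast}$-action, yields in weight $k$ an exact sequence
\[
	0 \to H^0_{\mathfrak{m}}(N)_k \to \shG_k \to \pil \bigl( \shC\van \tensor \OP(k) \bigr) \to H^1_{\mathfrak{m}}(N)_k \to 0
\]
together with isomorphisms $R^{i-1} \pil \bigl( \shC\van \tensor \OP(k) \bigr) \simeq H^i_{\mathfrak{m}}(N)_k$ for $i \geq 2$, where $H^{\bullet}_{\mathfrak{m}}$ denotes local cohomology along the zero section, computed weight by weight. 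This is a relative, graded version of Serre's theorem comparing graded modules with coherent sheaves on a projectivization, and is essentially formal.

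It remains to show that $H^i_{\mathfrak{m}}(N)_k \simeq \shExt^i \bigl( \shG_{-n-k}, \OB \bigr)$ for every $i \geq 0$, compatibly with the first map above. By Saito's theorem $N$ is Cohen--Macaulay over $S$, and since $\Mvan$ is holonomic its characteristic variety is pure of dimension $d = \dim B$, so $N$ has codimension $d$ in the $2d$-dimensional regular sheaf of rings $S$. Hence, locally on $B$, $N$ has a graded free resolution $L_{\bullet}$ of length exactly $d$; since passing to the weight-$k'$ part is exact, $(L_{\bullet})_{k'}$ is a finite locally free resolution of $\shG_{k'}$ over $\OB$ and computes $\shExt^{\bullet}(\shG_{k'}, \OB)$. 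On the other hand the zero section is cut out by the regular sequence of fibre coordinates, so $R\Gamma_{\mathfrak{m}}(N)$ is computed (up to a shift) by $H^d_{\mathfrak{m}}(S) \tensor_S L_{\bullet}$, and $H^d_{\mathfrak{m}}(S)$ is, up to a degree shift, the graded $\OB$-dual of $S$ twisted by $\omega_B$. Relative local duality for $T^{\ast}B \to B$ — which I would make rigorous by extending $\widetilde{N}$ to the projective completion $\PP(\Theta_B \oplus \OB)$, invoking Grothendieck--Serre duality for the resulting proper map to $B$, and bookkeeping the contribution of the divisor at infinity $P$ — then expresses each $H^i_{\mathfrak{m}}(N)_k$ as $\shExt^i$ over $\OB$ of a suitable graded piece of the dual Cohen--Macaulay module $N^{\dagger} = \shExt^d_S(N, \omega_{T^{\ast}B/B})$; the Cohen--Macaulay hypothesis is precisely what collapses $R\shHom_S(N, -)$ to a single $\shExt$.

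Finally I would use the polarization, which furnishes an isomorphism of $\Mvan$ with its own dual and hence, on associated gradeds, an isomorphism of $N^{\dagger}$ with $N$ up to a shift of the grading and a twist by a line bundle. Tracking the weight $n$ of the variation through the Tate twist $(-n)$ built into the polarization, and the shift built into the duality of filtered $\Dmod$-modules, pins the grading shift down to $k \mapsto -n-k$, while the potential twist by $\omega_B$ cancels against the $\omega_B$ already present in $\omega_{T^{\ast}B/B}$ and in $H^d_{\mathfrak{m}}(S)$. Feeding $\shG_{-n-k}$ back into the previous step and then into the local cohomology sequence produces the stated exact sequence and isomorphisms; by construction the first map is the pairing induced by the polarization, which on the smooth hyperplane sections equals $(2\pi i)^{-n}$ times the topological intersection pairing, the constant being the usual discrepancy between the Betti and de Rham normalizations. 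The hard part will be the interface between the last two steps: relative duality naturally produces $\shExt$ over $S$ (or, after dualizing, over the symmetric algebra of $\Omega^1_B$) of whole graded modules, with the tangent and cotangent directions entering asymmetrically, so converting this into $\shExt$ over $\OB$ of the single sheaf $\shG_{-n-k}$ forces one to exploit the self-duality of the resolution $L_{\bullet}$ coming from the polarization — equivalently, to verify that the polarization pairing intertwines $(L_{\bullet})_{-n-k}$ with the $\OB$-dual of $L_{\bullet}$ in complementary homological degree. Keeping all grading conventions, Tate twists, and the factor $(2\pi i)^{-n}$ mutually consistent is the delicate bookkeeping that will occupy most of the work.
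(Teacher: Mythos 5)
Your overall plan — relate the graded pieces $\shG_k$ to the pushforward $\pil\bigl(\shC\van\tensor\OP(k)\bigr)$ via the local cohomology exact sequence for the zero section of $T^{\ast}B$, invoke Saito's Cohen--Macaulay theorem to make graded local duality degenerate, and then feed in the self-duality of $(\Mvan,F)$ supplied by the polarization — is exactly the architecture of the paper's proof (carried out there for an arbitrary polarized Hodge module). The technical realization of graded local duality differs a little: the paper resolves $\shG$ by ``basic'' modules $\shB\tensor\shS(m)$ and computes local cohomology of a basic module by a \v{C}ech complex, while you propose the projective completion $\PP(\Theta_B\oplus\OB)$ and Grothendieck--Serre duality; the paper explicitly notes that the \v{C}ech step could be replaced by duality for $\pi\colon P\to B$, so this is a legitimate variant and not a different proof.

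One point in your last paragraph is misattributed and would cost you effort if you followed it literally. You say that converting ``$\shExt$ over $S$ of whole graded modules'' into $\shExt^i_{\OB}\bigl(\shG_{-n-k},\OB\bigr)$ ``forces one to exploit the self-duality of the resolution $L_{\bullet}$ coming from the polarization.'' That is not where the polarization enters. For a Cohen--Macaulay graded module $\shG$, graded local duality already yields, with no polarization at all, an isomorphism $H^i_{\mathfrak m}(N)_k\simeq\shExt^i_{\OB}\bigl(\shGd_{-k},\OB\bigr)$ where $\shGd=\shExt^d_{S}(\shG,\omegaS)$: passing from an $\shExt$ over $S$ of a graded module to degree-by-degree $\shExt$'s over $\OB$ is simply what the derived graded-dual functor $D^i$ does, acting one graded piece at a time. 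The polarization is used \emph{after} that, and only to identify the otherwise mysterious module $\shGd$ with $\shGr(-n)$, so that $\shGd_{-k}\simeq\shG_{-n-k}$ as $\OB$-modules. You need the compatibility of $L_{\bullet}$ with the polarization pairing only for the \emph{refined} assertion that the first map in the exact sequence is $(2\pi i)^{-n}$ times the intersection pairing; for the exact sequence and the isomorphisms themselves, no such check is required, and conflating the two steps would make the ``delicate bookkeeping'' harder than it actually is.
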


More precisely, one has $\shC \van \simeq \psiu \omega_Y \tensor
\OP(n+1)$, where $\psi \colon \famZ \to B$ is the projection map. The
proof of the theorem in \cite{Schnell} exploited the fact that $\famZ$ is a local complete
intersection inside the product $B \times Y$, and proceeded through a careful analysis
of the Leray spectral sequence for the associated Koszul complex (in the spirit of
M.~Green).

\subsection{Summary of the paper}
The purpose of the present paper is to generalize the above result from $\HMvan$ to arbitrary
polarized Hodge modules; and, at the same time, to provide a more conceptual proof.  We will
show (in Theorem~\ref{thm:HM} below) that the theorem remains true for an arbitary
polarized Hodge module of weight $w$ on a smooth algebraic variety $X$, where now $n
= w - \dim X$. In fact, we improve the original statement by obtaining an exact
sequence (resp.\@ isomorphism) of graded $\shS$-modules, whose component in degree $k$
is the exact sequence (resp.\@ isomorphism) above; here, and in the following, $\shS = \Sym \Theta_X$.

Two main tools are used in the proof: a result by Saito and Kashiwara that $\Gr^F \!
\Mmod$ is Cohen-Macaulay as an $\shS$-module when $(\Mmod, F)$ underlies a polarized
Hodge module (see \cite{Saito-on}*{p.~55} for more information); and local duality on the
cotangent bundle $T_X^{\ast}$, relative to the zero section.

After recalling the relevant facts about local cohomology and local duality in
\subsecsref{subsec:loc-coh-vb}{subsec:loc-dual-vb}, we deduce our main result from
the Cohen-Macaulay property of $\Gr^F \! \Mmod$ in
\subsecsref{subsec:CM}{subsec:duality}. We also show, in
\subsecref{subsec:polarization}, that the initial map in the exact sequence is
induced by the polarization of $M$. Concretely, this means that on the dense open set
where $M$ is a polarized variation of Hodge structure of weight $n$, the map \[
	\shHom_{\OX} \bigl( \shG_{-n-k}, \OX \bigr) \to \shG_k
\]
is given by $(-1)^{d(d-1)/2} S(\argbl, \argbl)$, for $S$ the polarization and $d =
\dim X$. 

As one might expect, the exact sequence and the isomorphisms in Theorem~\ref{thm:HM}
are really part of an exact triangle in the derived category (of quasi-coherent
graded $\shS$-modules). This circumstance is useful when applying other functors, and
so we deduce it from the preceding sections in \subsecref{subsec:derived}.

\subsection{Applications}

In \subsecref{subsec:deRham}, we give a small application of Theorem~\ref{thm:HM} to
the study of the de Rham complex $\DR_X(\Mmod)$. This complex is naturally filtered
by subcomplexes $F_k \DR_X(\Mmod)$, and we show that the inclusion $F_{m-n-1}
\DR_X(\Mmod) \subseteq \DR_X(\Mmod)$ is a filtered quasi-isomorphism, where $m \in
\ZZ$ is such that $F_{-m} \Mmod = 0$. For example, when $M$ is the intermediate extension of
a polarized variation of Hodge structure of weight $n$, and $F^{n+1} \Mmod = F_{-n-1}
\Mmod = 0$, then $F_0 \DR_X(\Mmod) \subseteq \DR_X(\Mmod)$ is a quasi-isomorphism.
This fact plays a role in \cite{Schnell}, where properties of $(\Mvan, F)$ were used
to study normal functions associated to primitive Hodge classes.

In cases where enough information is available about the characteristic variety,
Theorem~\ref{thm:HM} can be used to get information about the sheaves $\shG_k$ and
$F_k \Mmod$. For example, it was shown in \cite{Schnell} that the sheaves $F_k
\Mvan$ in the range $-n \leq k \leq 0$ satisfy Serre's condition $S_p$ for large
values of $p$, provided that the degree of the embedding $Y \subseteq \PPn{d}$ is
sufficiently high. In particular, they are reflexive sheaves. This was done by
showing that the subset of $B$ corresponding to hypersurfaces with ``many''
singularities has large codimension in $B$; and then appealing to the theorem to
obtain the inequality $\codim \Supp \shExt^i \bigl( \shG_k, \OB \bigr) \geq i + p$ for $i > 0$, which
is equivalent to Serre's condition.

\section{Local duality on vector bundles}

Let $X$ be a smooth algebraic variety (or any quasi-compact variety where every coherent sheaf is
the quotient of a locally free one), and $E \to X$ a vector bundle of rank $d \geq
1$. We review several facts about local cohomology on $E$ with support in the zero
section, as well as about local duality. The case when $E$ is an affine space is
well-known, and is explained very clearly in Appendix~1 of D.~Eisenbud's book
\cite{Eisenbud}*{pp.~187--199}. Short proofs are included here for the sake of
completeness; they are mostly straightforward generalizations of the ones in
\cite{Eisenbud}.

\subsection{Local cohomology on a vector bundle}
\label{subsec:loc-coh-vb}

Let $\shE$ be a locally free sheaf on $X$ of rank $d \geq 1$. The symmetric algebra
\[
	\shS = \Sym \shEdual = \bigoplus_{k \geq 0} \Sym^k \shEdual
\]
is a sheaf of graded $\OX$-algebras, and $E = \SSpec \shS$ is the vector
bundle corresponding to $\shE$. The map $f \colon E \to
X$ is affine, and we have $\shS \simeq \fl \shOE$. Quasi-coherent sheaves on $E$ are in
one-to-one correspondence with quasi-coherent $\shS$-modules on $X$; given a sheaf of
$\shS$-modules $\shG$, we let $\shG_E$ be the corresponding sheaf on $E$, so that $\fl
\shG_E \simeq \shG$.

The original variety $X$ is naturally embedded into $E$ by the zero section of the
vector bundle.  Let $\shF$ be any quasi-coherent sheaf on $E$. The subsheaf
$\shH_X^0(\shF)$ consists of all sections of $\shF$ whose support is contained in the
zero section $X \subseteq E$. Then $\shH_X^0$ is a left-exact
functor on quasi-coherent $\shO_E$-modules, and its $i$-th right-derived functor is
denoted by the symbol $\shH_X^i$; we call the sheaf $\shH_X^i(\shF)$ the
$i$-th \define{local cohomology sheaf} of $\shF$ with support in the zero section of
$E$. The corresponding quasi-coherent $\shS$-module is $\fl \shH_X^i(\shF)$; when
$\shG$ is a graded $\shS$-module, the local cohomology modules $\fl \shH_X^i \bigl(
\shG_E \bigr)$ are naturally graded $\shS$-modules as well. More information about local
cohomology sheaves can be found in \cite{SGA2}*{Expos\'es~I and II on pp.~5--26}, in
the expected greater generality.

We also consider the projectivization of the vector bundle, given by $P = \PProj
\shS$, together with the projection map $\pi \colon P \to X$ (see
\cite{EGA2}*{Chapitre~II, \S3}
for details). As usual, we write $\OP(1)$ for the universal line bundle on $P$. A
finitely generated graded $\shS$-module
\[
	\shG = \bigoplus_{k \in \ZZ} \shG_k,
\]
defines a coherent sheaf $\shG_P$ on the projective bundle $P$. We let $\shG(m)$
be the graded $\shS$-module with $\shG(m)_k = \shG_{m+k}$; evidently, $\OP(1)$ is the coherent
sheaf associated to $\shS(1)$. For $\shF$ a coherent sheaf on $P$, and $i \geq 0$, we
have a graded $\shS$-module
\[
	R^i \Gammast(\shF) = \bigoplus_{k \in \ZZ} 
		R^i \pil \bigl( \shF \tensor \OP(k) \bigr).
\]
Since $\OP(1)$ is relatively ample, the natural map $\shG \to \Gammast(\shG_P)$
is an isomorphism in large degrees; its behavior in arbitary degrees is related to
the local cohomology sheaves of $\shG_E$, as shown by the following proposition.

\begin{proposition} \label{prop:local-cohomology}
Let $\shG$ be a finitely generated graded $\shS$-module on $X$. Then there is an
exact sequence
\begin{diagram}[l>=2em]
	\fl \shH_X^0 \bigl( \shG_E \bigr) &\rIntoBold& \shG &\rTo& 
		\Gammast(\shG_P) &\rOnto& \fl \shH_X^1 \bigl( \shG_E \bigr)
\end{diagram}
of graded $\shS$-modules. Moreover, for each $i \geq 2$, we have an isomorphism
\[
	\fl \shH_X^i \bigl( \shG_E \bigr) \simeq R^i \Gammast(\shG_P)
\]
again of graded $\shS$-modules.
\end{proposition}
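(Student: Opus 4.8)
The plan is to compare the two natural descriptions of a finitely generated graded $\shS$-module: as a quasi-coherent sheaf $\shG_E$ on the total space $E$, and as a coherent sheaf $\shG_P$ on the projective bundle $P$. First I would introduce the punctured vector bundle $E^\circ = E \setminus X$, where $X$ is embedded by the zero section, together with its projection $g \colon E^\circ \to P$; since removing the zero section amounts to inverting the irrelevant ideal, $g$ is the total space of the $\mathbb{G}_m$-bundle underlying $\OP(-1)$, and is in particular affine. The key local computation is that for a coherent sheaf $\shF$ on $P$, one has $g_\ast g^\ast \shF \simeq \bigoplus_{k \in \ZZ} \shF \tensor \OP(k)$ as sheaves on $P$, and hence $R^i \pil(E^\circ, \text{restriction of } \shG_E) \simeq \bigoplus_i R^i\Gammast(\shG_P)$ after pushing down to $X$; here I use that $g$ is affine so that $Rg_\ast$ has no higher terms, and that $\pi \circ g$ is the restriction of $f$ to $E^\circ$.

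Next I would invoke the long exact sequence of local cohomology with support in the closed subset $X \subseteq E$, applied to the sheaf $\shG_E$. This reads
\[
	\dotsb \to \shH_X^i(\shG_E) \to H^i(E, \shG_E) \to H^i(E^\circ, \shG_E|_{E^\circ}) \to \shH_X^{i+1}(\shG_E) \to \dotsb
\]
in the derived-functor sense; pushing everything forward along the affine morphism $f \colon E \to X$ (which is exact on quasi-coherent sheaves) turns this into an exact sequence of graded $\shS$-modules on $X$. Since $f$ is affine, $R^i f_\ast \shG_E = 0$ for $i \geq 1$, and $\fl \shG_E \simeq \shG$; combining this with the identification from the previous paragraph, $R^i(f|_{E^\circ})_\ast(\shG_E|_{E^\circ}) \simeq R^i\Gammast(\shG_P)$, the long exact sequence collapses to exactly the four-term sequence
\begin{diagram}
	\fl \shH_X^0 \bigl( \shG_E \bigr) &\rTo& \shG &\rTo& \Gammast(\shG_P) &\rTo& \fl \shH_X^1 \bigl( \shG_E \bigr) &\rTo& 0
\end{diagram}
in low degrees, and to isomorphisms $\fl \shH_X^i(\shG_E) \simeq R^i\Gammast(\shG_P)$ for $i \geq 2$. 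The injectivity of the first arrow (making it $\rIntoBold$) is immediate, since $\fl\shH_X^0(\shG_E)$ is by definition the submodule of sections supported on the zero section. That all maps are $\shS$-linear and respect the gradings follows because every sheaf and morphism in sight is built $\mathbb{G}_m$-equivariantly on $E$.

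The main obstacle is the base-change identity $g_\ast g^\ast \shF \simeq \bigoplus_{k} \shF \tensor \OP(k)$ and the verification that, after applying $Rf_\ast$, the cohomology of $\shG_E$ over $E^\circ$ really is the graded module $R^\bullet\Gammast(\shG_P)$ in each degree with the correct $\shS$-action. This is a local statement on $X$, so one may assume $E$ trivial; then $E^\circ$ is $X \times (\AAn{d} \setminus 0)$, $P$ is $X \times \PPn{d-1}$, and the claim reduces to the standard computation of the cohomology of line bundles on projective space together with the Euler-sequence description of how $\OP(1)$ sits inside $g_\ast \OP[E^\circ]$ — essentially the content of Eisenbud's appendix, which the paper explicitly allows us to cite. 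The only care needed beyond the affine case is checking that the gluing over $X$ is compatible, which is automatic because all the constructions are functorial in $\shF$ and $\shG$. Everything else is formal manipulation of the local cohomology sequence and the vanishing of higher direct images along the affine map $f$.
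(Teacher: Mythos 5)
Your proposal is correct and follows essentially the same route as the paper: set up the diagram with $E - X$, $E$, $P$, $X$, identify $\shG_E\vert_{E-X}$ with the pullback of $\shG_P$ via the grading and the projection formula, apply the local cohomology exact sequence for the zero section on $E$, and push down along the affine map $f$. The only genuine difference is cosmetic --- the paper cites the four-term sequence and the isomorphism $\shH_X^i(\shG_E) \simeq R^{i-1}\jl\ju\shG_E$ directly from SGA2, while you derive the same facts from the local cohomology triangle --- plus one notational slip worth fixing: in your long exact sequence the middle terms should be the cohomology sheaves of $\shG_E$ (which equal $\shG_E$ in degree $0$ and vanish in higher degrees, and that vanishing is precisely what makes the sequence collapse), not the global cohomology groups $H^i(E,\shG_E)$, which need not vanish for $i>0$; the vanishing $R^if_\ast\shG_E = 0$ for $i\geq 1$ is what you use only after applying $\derR\fl$ to the triangle.
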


\begin{proof}
For the convenience of the reader, we briefly review the argument. Consider the following
commutative diagram of maps.
\begin{diagram}[tight,width=2.5em]
E - X &\rTo^j& E \\
\dTo^q && \dTo_f \\
P &\rTo^{\pi}& X
\end{diagram}
By \cite{SGA2}*{Corollaire~2.11 on p.~16}, we have an exact sequence
\begin{equation} \label{eq:seq-i}
\begin{diagram}
0 &\rTo& \shH_X^0 \bigl( \shG_E \bigr) &\rTo& \shG_E &\rTo& \jl \ju \shG_E &\rTo&
	\shH_X^1 \bigl( \shG_E \bigr) &\rTo& 0.
\end{diagram}
\end{equation}
Now $\shG$ is graded, and so we have $\ju \shG_E \simeq \qu \shG_P$. Using the projection
formula, we then find that
\[
	\fl \jl \ju \shG_E \simeq \pil \left( \shG_P \tensor \ql \shO_{E - X} \right)
		\simeq \pil \left( \shG_P \tensor \bigoplus_{k \in \ZZ} \OP(k) \right)
		\simeq \Gammast(\shG_P).
\]
Applying the exact functor $\fl$ to the sequence in \eqref{eq:seq-i}, and noting that
$\fl \shG_E \simeq \shG$, we obtain the first half of the
proposition. The second half follows by similar considerations from the isomorphism
$\shH_X^i \bigl( \shG_E \bigr) \simeq R^{i-1} \jl \ju \shG_E$ 
for $i \geq 2$, also given in \cite{SGA2}*{p.~16}.
\end{proof}

\subsection{Local duality on a vector bundle}
\label{subsec:loc-dual-vb}

Given a graded $\shS$-module $\shG$, we define its \define{graded dual} to be
\begin{equation} \label{eq:graded-dual}
	D(\shG) = \bigoplus_{k \in \ZZ} \shHom_{\OX} \bigl( \shG_{-k}, \OX \bigr).
\end{equation}
This is again a graded $\shS$-module, with the summand $\shHom_{\OX} \bigl(
\shG_{-k}, \OX \bigr)$ in degree $k$; the action of $\shS$ is given by the rule $(s
\cdot \phi)(g) = \phi(s g)$. The $i$-th derived functor of $D$ is then evidently
\[
	D^i(\shG) = \bigoplus_{k \in \ZZ} \shExt_{\OX}^i \bigl( \shG_{-k}, \OX \bigr).
\]
Note that even when $\shG$ is finitely generated as an $\shS$-module, $D^i(\shG)$ is usually
not; unless, of course, $\shG$ actually has finite length.

In analogy with the canonical line bundle of projective space, we also introduce the graded
$\shS$-module $\omegaS =  \det \shEdual \tensor_{\OX} \shS(-d)$, whose graded piece in
degree $k$ is $\det \shEdual \tensor \Sym^{k-d} \shEdual$; here $d$ is
still the rank of the vector bundle. Then $\omega_{P/X}$ is the sheaf
associated to $\omegaS$, because a simple calculation with the Euler sequence for
$\pi \colon P \to X$ shows that
\begin{equation} \label{eq:omega-PX}
	\omega_{P / X} \simeq \bigl( \piu \det \shEdual \bigr) \tensor \OP(-d).
\end{equation}

The second important result about local cohomology sheaves on a vector bundle is the
following duality theorem, known as \define{graded local duality} in the case of an
affine space.

\begin{proposition} \label{prop:local-duality}
Let $\shG$ be a finitely generated graded $\shS$-module on $X$.
Then there is a convergent fourth-quadrant spectral sequence of graded $\shS$-modules,
\[
	E_2^{p,q} = D^p \Bigl( \shExt_{\shS}^{-q} \bigl( \shG, \omegaS \bigr) \Bigr)
		\Longrightarrow \shH_X^{d+p+q} \bigl( \shG_E \bigr),
\]
functorial in the sheaf $\shG$.
\end{proposition}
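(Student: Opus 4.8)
The plan is to reduce graded local duality on the vector bundle $E$ to the absolute local duality statement, and then pass to the relative setting over $X$ by a \emph{local-to-global} argument. First I would work locally on $X$: pick an open affine $U = \Spec R$ over which $\shE$ trivializes, so that $\shS|_U$ becomes the polynomial ring $A = R[t_1,\dots,t_d]$ with its standard grading, $E|_U$ becomes $\Spec A$, and the zero section is cut out by the irrelevant ideal $\mathfrak{m} = (t_1,\dots,t_d)$. Over such a $U$, the sheaf $\omegaS|_U$ is (non-canonically) $A(-d)$, and the claim becomes the classical graded local duality theorem for the polynomial ring, in the form stated in Eisenbud's Appendix~1 \cite{Eisenbud}*{pp.~187--199}: for a finitely generated graded $A$-module $G$ one has a spectral sequence $E_2^{p,q} = D^p\bigl(\Ext_A^{-q}(G, A(-d))\bigr) \Rightarrow H_{\mathfrak{m}}^{d+p+q}(G)$. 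So the content over each $U$ is already available; the work is to make it canonical and to glue.

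The way to get the spectral sequence canonically and globally is hypercohomological. I would first fix a bounded-above resolution of $\shG$ by finitely generated locally free graded $\shS$-modules $\shP^\bullet \to \shG$ — possible since $\shS$ is a sheaf of coherent graded algebras over the (nice) base $X$, locally a polynomial ring, and $\shG$ is coherent. Then $\shExt_{\shS}^{-q}(\shG,\omegaS)$ is computed by the complex $\shHom_{\shS}(\shP^\bullet, \omegaS)$, and one forms the double complex obtained by applying a suitable acyclic resolution computing $\shH_X^{\bullet}$ (e.g.\ the \v{C}ech-type or flasque Godement resolution, or better, by Proposition~\ref{prop:local-cohomology}, an explicit model via $\jl\ju$) to each term $\shHom_{\shS}(\shP^i,\omegaS)_E$. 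The abutment filtration gives two spectral sequences. One of them is the desired $E_2$ page, provided I can identify the other one as degenerating onto $\shH_X^{d+\bullet}(\shG_E)$. That identification is exactly graded local duality for the \emph{free} modules $\shP^i$: one needs $\shH_X^{j}\bigl((\shP^i)_E\bigr) = 0$ for $j \neq d$ and $\shH_X^d\bigl((\shP^i)_E\bigr) \simeq D\bigl(\shHom_{\shS}(\shP^i,\omegaS)\bigr)$ canonically. For $\shP^i = \shS$ this is the computation $\shH_X^d(\shO_E) \simeq D(\omegaS)$, i.e.\ $\fl \shH_X^d(\shO_E) \simeq \det\shE \tensor \shS(d)$ with the grading reversed — which I would prove directly from Proposition~\ref{prop:local-cohomology} (the top local cohomology is $R^{d-1}\Gammast$ of $\shO_P$, computed by relative Serre duality on $\pi\colon P \to X$ using \eqref{eq:omega-PX}) and then extend $\shO_E$-linearly and $\shS$-equivariantly to all finitely generated free $\shS$-modules. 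The vanishing $\shH_X^j = 0$ for $j < d$ follows because the zero section has codimension $d$ and $\shO_E$ is, locally, Cohen–Macaulay (a polynomial ring), so $\depth_{\mathfrak{m}} = d$; and for $j > d$ it follows from cohomological dimension of $P$ being $d-1$ together with Proposition~\ref{prop:local-cohomology}.

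The main obstacle I anticipate is \emph{canonicity and functoriality}: over each trivializing $U$ the isomorphism $\shH_X^d(\shO_E) \simeq D(\omegaS)$ depends a priori on the chosen trivialization of $\shE$, and one must check that the ambiguity is precisely absorbed by the twist $\det\shEdual$ appearing in $\omegaS$, so that the local isomorphisms glue to a global, $\shS$-linear one independent of choices. Concretely this is the statement that the natural pairing $\shH_X^d(\shO_E) \tensor_{\shS} \omegaS \to D(\shS) = \bigoplus_k \shO_X$ (in degree $0$: the residue map) is well-defined globally — which is again relative Serre duality for $\pi\colon P \to X$, now read off from \eqref{eq:omega-PX}, so the Euler-sequence computation already done in the text does the essential job. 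Once that global, functorial duality isomorphism for free modules is in hand, the double-complex argument produces the spectral sequence with the asserted $E_2$ page, its $\shS$-module structure, its fourth-quadrant support (from $D^p$ vanishing for $p < 0$ or $p > d$, and $\Ext_{\shS}^{-q}$ vanishing for $q > 0$), and its functoriality in $\shG$ — completing the proof.
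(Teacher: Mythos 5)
Your proposal follows the same overall strategy as the paper: resolve $\shG$ by a class of ``good'' graded $\shS$-modules, show that for these modules local cohomology with support in the zero section vanishes except in degree $d$, identify $\shH_X^d$ of a good module with $D$ of its $\shS$-dual, and then run the spectral sequence of the resolution (equivalently, the hypercohomology double complex you describe). Where you diverge is in the key lemma computing $\fl \shH_X^d(\shO_E) \simeq D(\omegaS)$: you want to deduce it from $\shH_X^d(\shO_E)\simeq R^{d-1}\Gammast(\shO_P)$ plus relative Serre duality for $\pi\colon P\to X$ and the identity \eqref{eq:omega-PX}, and then handle canonicity by a local-to-global gluing argument. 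The paper explicitly acknowledges this route (``one can easily obtain the isomorphism in \eqref{eq:shH-basic} by using duality for the morphism $\pi$'') but instead computes $\shH_X^d(\shO_E)$ directly by a \v{C}ech complex \`a la Eisenbud, precisely because that computation makes the graded $\shS$-module structure on the answer transparent and the isomorphism manifestly coordinate-free, so there is nothing to glue. Your own ``main obstacle'' paragraph correctly identifies this as the delicate point of the Serre-duality route: Serre duality out of the box gives an $\OX$-linear pairing, and one must still check $\shS$-linearity (degree by degree, compatibility with the cup-product action of $\shS_1$ on $R^{d-1}\pil\OP(\bullet)$). That check is doable, but it is exactly the bookkeeping the paper's \v{C}ech argument is designed to avoid. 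One further small point: your existence claim for global finite locally free graded $\shS$-resolutions implicitly uses the hypothesis, stated at the start of the paper's Section~B, that every coherent sheaf on $X$ is a quotient of a locally free one; the paper's ``basic'' modules $\bigoplus \shB_k\tensor\shS(-k)$ make this reduction explicit and suffice for the argument.
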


The following notion will be useful during the proof. A graded $\shS$-module will be
called \define{basic} if it is a finite direct sum of modules of the form $\shB
\tensor_{\OX} \shS(m)$, with $\shB$ a locally free $\OX$-module. The local cohomology
sheaves are easy to describe in that case.

\begin{lemma} \label{lem:res-basic}
Let $\shF = \shB \tensor \shS(m)$ be a basic graded $\shS$-module. Then
\begin{equation} \label{eq:shH-basic}
	\fl \shH_X^i \bigl( \shF_E \bigr) \simeq
		\begin{cases}
			D \Bigl( \shHom_{\shS} \bigl( \shF, \omegaS \bigr) \Bigr) 
					&\text{for $i = d$,} \\
			0 &\text{otherwise.}
		\end{cases}	
\end{equation}
\end{lemma}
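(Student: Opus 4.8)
The plan is to compute both sides of \eqref{eq:shH-basic} directly, by combining Proposition~\ref{prop:local-cohomology} with the cohomology of the projective bundle $\pi\colon P\to X$. Since the statement is additive in $\shF$, and since $\shHom_{\shS}\bigl(\shB\tensor\shS(m),\omegaS\bigr)\simeq\shB^{\vee}\tensor\omegaS(-m)$ while the functor $D$ of \eqref{eq:graded-dual} sends $\shB^{\vee}\tensor(-)$ to $\shB\tensor D(-)$ and a shift $(-m)$ to the shift $(m)$, I would first reduce to the case $\shF=\shS$: it then suffices to show that $\fl\shH_X^i\bigl(\shOE\bigr)=0$ for $i\neq d$ and that $\fl\shH_X^d\bigl(\shOE\bigr)\simeq D(\omegaS)$ as graded $\shS$-modules. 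The general basic case follows because, as a sheaf, $\shF_E\simeq\fu\shB$, because local cohomology with support in the zero section commutes with tensoring by the locally free sheaf $\fu\shB$ (locally on $X$ it is computed by the stable Koszul complex on the fibre coordinates, which commutes with such a tensor product), and because of the projection formula for the affine map $f$ — one only has to carry along the grading shift by $m$.

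So let $\shF=\shS$, whence $\shF_P=\OP$. By Proposition~\ref{prop:local-cohomology}, every $\fl\shH_X^i\bigl(\shOE\bigr)$ is controlled by the graded $\shS$-modules $\Gammast(\OP)=\bigoplus_k\pil\OP(k)$ and $R^{j}\Gammast(\OP)=\bigoplus_k R^{j}\pil\OP(k)$ with $j\geq1$; more precisely $\fl\shH_X^i\bigl(\shOE\bigr)\simeq R^{i-1}\Gammast(\OP)$ for $i\geq2$, as the isomorphism $\shH_X^i\simeq R^{i-1}\jl\ju$ used in the proof of that proposition shows. Here I would use the standard cohomology of the $\PPn{d-1}$-bundle $\pi$: one has $\pil\OP(k)\simeq\Sym^k\shEdual$ for $k\geq0$; the cohomology of $\OP(k)$ along the fibres is concentrated in degrees $0$ and $d-1$; and $\pil\OP(k)=0$ for $k<0$ when $d\geq2$. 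Hence, for $d\geq2$, the natural map $\shS\to\Gammast(\OP)$ is an isomorphism, so $\fl\shH_X^0\bigl(\shOE\bigr)=\fl\shH_X^1\bigl(\shOE\bigr)=0$; and $R^{j}\Gammast(\OP)$ is nonzero only for $j=d-1$, where it equals $\bigoplus_{k\leq-d}R^{d-1}\pil\OP(k)$, so $\fl\shH_X^i\bigl(\shOE\bigr)=0$ for $i\neq d$ while $\fl\shH_X^d\bigl(\shOE\bigr)\simeq\bigoplus_{k\leq-d}R^{d-1}\pil\OP(k)$. The degenerate case $d=1$ (where $P=X$, $\pi=\id$, $\OP(1)=\shEdual$) is an immediate direct computation: $\fl\shH_X^i\bigl(\shOE\bigr)=0$ for $i\neq1$, and $\fl\shH_X^1\bigl(\shOE\bigr)\simeq\bigoplus_{k<0}(\shEdual)^{\tensor k}$.

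It remains to recognize the surviving module as $D(\omegaS)$, and this is the substantive step. I would apply relative Serre duality along the smooth projective morphism $\pi$ of relative dimension $d-1$, in the form of a natural isomorphism
\[
	R^{d-1}\pil\OP(k)\simeq\shHom_{\OX}\Bigl(\pil\bigl(\omega_{P/X}\tensor\OP(-k)\bigr),\OX\Bigr);
\]
by \eqref{eq:omega-PX} the sheaf being dualized is $\det\shEdual\tensor\pil\OP(-k-d)\simeq\det\shEdual\tensor\Sym^{-k-d}\shEdual$, which is exactly $(\omegaS)_{-k}$ for $\omegaS=\det\shEdual\tensor\shS(-d)$. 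Summing over $k$ gives $\bigoplus_k R^{d-1}\pil\OP(k)\simeq D(\omegaS)$ as $\OX$-modules (and likewise $\bigoplus_{k<0}(\shEdual)^{\tensor k}\simeq D(\omegaS)$ when $d=1$, by the evident identity). I expect the main obstacle to be checking that this is an isomorphism of graded \emph{$\shS$-modules}: on the left the action of a local section $s$ of $\shS_1=\shEdual=\pil\OP(1)$ is induced by multiplication $\OP(k)\to\OP(k+1)$ by $s$, and one must verify — from the functoriality of the trace map — that Serre duality turns it into the transpose of multiplication by $s$ from $(\omegaS)_{-k-1}$ to $(\omegaS)_{-k}$, which is precisely the action defining $D(\omegaS)$ in \eqref{eq:graded-dual}. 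This is formal but forces one to keep the $\PProj$-conventions consistent, in particular to have $\det\shEdual$, not $\det\shE$, in \eqref{eq:omega-PX}. By contrast, the vanishing of $\fl\shH_X^i\bigl(\shOE\bigr)$ for $i\neq d$ is routine, and could equally be deduced from the fact that the zero section is, Zariski-locally on $X$, cut out of $E$ by a regular sequence of length $d$, so that $\shOE$ is Cohen-Macaulay of depth $d$ along it and its stable Koszul complex has length $d$.
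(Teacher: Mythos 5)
Your argument is correct, and its overall framework — reduce to $\shF=\shS$, compute via Proposition~\ref{prop:local-cohomology} and the cohomology of $\OP(k)$ along $\pi$, then identify the surviving module in degree $d$ — matches the paper's. The genuine difference is in the last step: you identify $\bigoplus_k R^{d-1}\pil\OP(k)$ with $D(\omegaS)$ by relative Serre duality for $\pi$, whereas the paper explicitly mentions that route (``one can easily obtain the isomorphism in \eqref{eq:shH-basic} by using duality for the morphism $\pi\colon P\to X$'') and then deliberately passes on it in favor of a \v{C}ech computation in the style of Eisenbud. The reason given is exactly the obstacle you flag at the end: the \v{C}ech description realizes $\shH=\fl\shH_X^d\bigl(\shOE\bigr)$ concretely as fractions $F/(t_1\dotsm t_d)^m$, so the graded $\shS$-action is visibly multiplication and the identification with $\det\shE\tensor D\bigl(\shS(-d)\bigr)=D(\omegaS)$ is manifestly $\shS$-linear; with Serre duality one must check that the trace pairing carries multiplication by $s\in\shS_1$ to its transpose, which is true but requires unwinding the compatibilities of relative duality with pushforward and twisting. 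So the two proofs buy different things: yours is shorter and conceptually cleaner if one is prepared to cite relative duality together with its $\shS$-equivariance, while the paper's is more elementary and self-contained and produces the $\shS$-module structure on the nose. A small further point: your reduction to $\shF=\shS$ (projection formula plus the observation that local cohomology along the zero section commutes with $\fu\shB\tensor(-)$, checked locally via the stable Koszul complex) is spelled out more carefully than the paper's one-line ``since $\shB$ is locally free,'' and is exactly what that remark is relying on.
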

\begin{proof}
Since $\shB$ is locally free, one is quickly reduced to the case $\shF = \shS$, where
$\shF_E = \shOE$. When $d = 1$, the assertion follows immediately from the exact sequence in
Proposition~\ref{prop:local-cohomology}. Thus we may assume from now on that $d \geq
2$. Since $\pil \OP(k) \simeq \shS_k$, while $R^i \pil \OP(k) = 0$ for $1 \leq i \leq
d - 1$, Proposition~\ref{prop:local-cohomology} shows that
$\shH_X^i \bigl( \shOE \bigr) = 0$ for $i \neq d$. For $i = d$, we let
\begin{equation} \label{eq:shH}
	\shH = \fl \shH_X^d \bigl( \shOE \bigr) \simeq 
		\bigoplus_{k \in \ZZ} R^{d-1} \pil \OP(k).
\end{equation}
At this point, one can easily obtain the isomorphism in \eqref{eq:shH-basic} by using
duality for the morphism $\pi \colon P \to X$. Following \cite{Eisenbud}*{p.~191}, we
shall give a more concrete derivation using a \v{C}ech complex, because this serves
to show the $\shS$-module structure on $\shH$ more clearly.

Let $U \simeq \Spec A$ be an affine open subset of $X$ over which the vector bundle $E$ is trivial. Then
$S = \Gamma(U, \shS) \simeq A \lbrack t_1, \dotsc, t_d \rbrack$ as graded
$A$-algebras, and $f^{-1}(U) \simeq \Spec S$. Let $I = A t_1 + \dotsb + A t_d$ be the
irrelevant ideal, and $V(I) = \Spec(S/I)$. As in the proof of
Proposition~\ref{prop:local-cohomology}, the local cohomology module $H = \Gamma(U, \shH)$ that we need is
\[
	H = H_{V(I)}^d \bigl( \Spec S, \shO \bigr) \simeq 
		H^{d-1} \bigl( \Spec S - V(I), \shO \bigr);
\]
it can be computed by the \v{C}ech complex for the standard open cover of $\Spec S -
V(I)$. As a graded $S$-module, $H$ is therefore isomorphic to the cokernel of the map
\[
	\bigoplus_{i=1}^d S 
		\bigl\lbrack (t_1 \dotsm t_{i-1} t_{i+1} \dotsm t_d)^{-1} \bigr\rbrack
		\to S \bigl\lbrack (t_1 \dotsm t_d)^{-1} \bigr\rbrack.
\]
Thus $H_k$ is generated by elements of the form $F/(t_1 \dotsm t_d)^m$, where $F$ is
a homogeneous polynomial of degree $k + dm$. From degree
considerations, we see that $H_k = 0$ for $k > -d$, while $H_{-d} \simeq A$,
generated by $(t_1 \dotsm t_d)^{-1}$. We now have a map
\[
	H_k \to \Hom_A \bigl( S_{-k-d}, H_{-d} \bigr),
\]
by sending an element $F/(t_1 \dotsm t_d)^m \in H_k$ to the functional $G \mapsto
FG/(t_1 \dotsm t_d)^m$, for $G \in S_{-k-d}$. This is easily seen to be an
isomorphism; moreover, since the $S$-module structure on $H$ is simply given by multiplication, we
obtain
\[
	H \simeq \bigoplus_{k \in \ZZ} \Hom_A \bigl( S_{-k-d}, H_{-d} \bigr) 
		= H_{-d} \tensor_A D \bigl( S(-d) \bigr)
\]
as graded $S$-modules. As written, the isomorphism is coordinate-independent, and so
we get a global isomorphism of graded $\shS$-modules
\[
	\shH \simeq \shH_{-d} \tensor_{\OX} D \bigl( \shS(-d) \bigr).
\]
Note that $\shH_{-d}$ has rank one, and is therefore a line bundle on $X$. Because of
\eqref{eq:shH}, we find that $\shH_{-d} \simeq R^{d-1} \pil \OP(-d) \simeq \det
\shE$, and this concludes the proof.
\end{proof}

\begin{lemma} \label{lem:resolution}
Let $\shG$ be a finitely generated graded $\shS$-module on $X$. Then $\shG$ can be
resolved in the form
\begin{diagram}
	\dotsb &\rTo& \shF^{-2} &\rTo& \shF^{-1} &\rTo& \shF^0 &\rOnto& \shG
\end{diagram}
by basic graded $\shS$-modules $\shF^i$.
\end{lemma}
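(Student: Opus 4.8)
The plan is to build the resolution one step at a time, so it suffices to produce, for any finitely generated graded $\shS$-module $\shG$, a surjection $\shF^0 \rOnto \shG$ with $\shF^0$ basic; then iterate on the kernel (which is again finitely generated, since $\shS$ is noetherian). So everything reduces to the following: given $\shG$ finitely generated and graded, find a basic module mapping onto it. First I would work locally. Over an affine open $U \simeq \Spec A$ trivializing $E$, the module $\shG(U)$ is a finitely generated graded $A[t_1,\dotsc,t_d]$-module, hence generated by finitely many homogeneous elements, say of degrees $m_1,\dotsc,m_r$; this gives a graded surjection $\bigoplus_j \shS(-m_j)|_U \rOnto \shG|_U$. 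The issue is globalizing this.

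The key step is to replace "finitely many generators over an affine" by a global statement. Cover $X$ by finitely many affine opens $U_\alpha$ trivializing $E$ (possible since $X$ is quasi-compact). On each $U_\alpha$ pick finitely many homogeneous generators; since the generators live in various graded pieces $\shG_m$, and each $\shG_m$ is a coherent $\OX$-module, I can use the hypothesis on $X$ — every coherent sheaf is a quotient of a locally free one — to find, for each relevant degree $m$, a locally free $\OX$-module $\shB_m$ and a surjection $\shB_m \rOnto \shG_m$ whose image includes all the chosen local generators in degree $m$. Taking the direct sum over the finitely many degrees that occur, set $\shF^0 = \bigoplus_m \shB_m \tensor_{\OX} \shS(-m)$, a basic graded $\shS$-module. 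The maps $\shB_m \to \shG_m \hookrightarrow \shG$ extend $\shS$-linearly to a graded $\shS$-module homomorphism $\shF^0 \to \shG$.

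It remains to check this map is surjective. Surjectivity is local on $X$, so I may test it on each $U_\alpha$; there the image contains the chosen homogeneous generators of $\shG|_{U_\alpha}$ over $\shS|_{U_\alpha}$, hence is all of $\shG|_{U_\alpha}$. Thus $\shF^0 \rOnto \shG$, and letting $\shG^1 = \ker(\shF^0 \to \shG)$ — finitely generated and graded because $\shS$ is a noetherian sheaf of rings on the noetherian scheme $X$ and $\shF^0$ is coherent over $\shS$ — we repeat the construction to obtain $\shF^{-1} \rOnto \shG^1$, and so on, splicing together the short exact sequences $0 \to \shG^{i+1} \to \shF^{-i} \to \shG^i \to 0$ into the desired left-unbounded resolution by basic modules.

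The main obstacle is the globalization step: making sure that a single basic module surjects onto $\shG$ rather than merely a local version. This is handled by the twin hypotheses that $X$ is quasi-compact (so finitely many trivializing affines suffice, and only finitely many twisting degrees $m$ are needed) and that coherent sheaves on $X$ are quotients of locally free ones (so each graded piece $\shG_m$ is dominated by a locally free $\shB_m$). Everything else — extending $\OX$-linear maps to $\shS$-linear ones, noetherianness of the kernels — is routine.
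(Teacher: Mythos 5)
Your proof is correct and follows essentially the same route as the paper: use quasi-compactness to extract a finite set of degrees whose graded pieces generate $\shG$, replace each coherent piece $\shG_m$ by a locally free cover $\shB_m$, form the basic module $\bigoplus_m \shB_m \tensor_{\OX} \shS(-m)$, and iterate on the (again finitely generated) kernel. The only cosmetic difference is that you spell out the affine-local generating step more explicitly, whereas the paper states directly that $\bigoplus_{k\in F}\shG_k\tensor_{\OX}\shS(-k)\to\shG$ is surjective; also, once $\shB_m\rOnto\shG_m$ is a surjection its image automatically contains every local generator in degree $m$, so the clause ``whose image includes all the chosen local generators'' is redundant.
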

\begin{proof}
It suffices to show that every finitely generated graded $\shS$-module $\shG$ admits a
surjection by a basic one. Since $\shG$ is finitely generated, and
$X$ is quasi-compact, there is a finite set $F \subseteq \ZZ$ such that 
\[
	\bigoplus_{k \in F} \shG_k \tensor_{\OX} \shS(-k) \to \shG
\]
is surjective. Each $\shG_k$ is a coherent sheaf of $\OX$-modules, and because $X$ is
smooth, there is a locally free sheaf $\shB_k$ mapping onto
$\shG_k$. Then $\shF^0 = \bigoplus_{k \in F} \shB_k \tensor \shS(-k)$ is a basic
module mapping onto $\shG$.
\end{proof}

Here is the proof of Proposition~\ref{prop:local-duality}.
\begin{proof}
Let $\shG$ be any finitely generated graded $\shS$-module. According to Lemma~\ref{lem:resolution},
there is a complex $\shF^{\bullet}$ of basic graded $\shS$-modules resolving $\shG$. The
local cohomology sheaves of $\shG$ are therefore computed by a spectral sequence
\[
	E_1^{p,q} = \fl \shH_X^q \bigl( \shF^p \bigr) \Longrightarrow
		\shH_X^{p+q} \bigl( \shG_E \bigr).
\]
From \eqref{eq:shH-basic}, all but one row of the $E_1$-page is zero, and so
$\shH_X^{d+i} \bigl( \shG_E \bigr)$ is isomorphic to the cohomology in degree $i$ of the complex
\[
	\fl \shH_X^d \bigl( \shF^{\bullet} \bigr) \simeq 
		D \bigl( \shHom_{\shS} \bigl( \shF^{\bullet}, \omegaS \bigr) \Bigr).
\]
The spectral sequence is now simply the one for the composition of the two contravariant
functors $\shHom_{\shS} \bigl( \argbl, \omegaS \bigr)$ and $D$.
\end{proof}

\subsection{Local duality for Cohen-Macaulay modules}
\label{subsec:loc-dual-CM}

Now suppose that the graded $\shS$-module $\shG$ is in addition \define{Cohen-Macaulay} of
dimension $d$; that is to say, the associated coherent sheaf $\shG_E$ is
Cohen-Macaulay on $E$, with purely $d$-dimensional support. Consequently,
$\shExt_{\shS}^q(\shG, \shS) = 0$ unless $q = d$; let 
\[
	\shGd = \shExt_{\shS}^d \bigl( \shG, \omegaS \bigr) = 
		\shExt_{\shS}^d \bigl( \shG, \shS(-d) \bigr) \tensor_{\OX} \det \shEdual
\]
be the dual $\shS$-module. The spectral sequence in
Proposition~\ref{prop:local-duality} degenerates at the $E_2$-page, because it has
only one nonzero row, and we find that
\[
	\fl \shH_X^p \bigl( \shG_E \bigr) 
		\simeq D^p \bigl( \shGd \bigr) =
		\bigoplus_{k \in \ZZ} \shExt_{\OX}^p \bigl( \shGd_{-k}, \OX \bigr) 
\]
for all $p \geq 0$. In combination with Proposition~\ref{prop:local-cohomology}, we
now get the following result.

\begin{theorem} \label{thm:CM-duality}
Let $\shG$ be a finitely generated graded $\shS$-module, which is Cohen-Macaulay of
dimension $d = \rk \shE$. Let $\shGd = \shExt_{\shS}^d(\shG, \omegaS)$ be the dual
module. Then there is an exact sequence
\begin{diagram}[l>=2em]
	D \bigl( \shGd \bigr) &\rIntoBold& \shG &\rTo& 
	\Gammast(\shG_P) &\rOnto& D^1 \bigl( \shGd \bigr)
\end{diagram}
of graded $\shS$-modules. Moreover, for each $i \geq 2$, we have an isomorphism
\[
	D^i \bigl( \shGd \bigr) \simeq R^i \Gammast(\shG_P),
\]
again respecting the graded $\shS$-module structure on both sides.
\end{theorem}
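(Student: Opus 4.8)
The plan is to combine the two previous propositions directly. First I would recall that Proposition~\ref{prop:local-cohomology} already gives, for any finitely generated graded $\shS$-module $\shG$, an exact sequence
\[
	\fl \shH_X^0 \bigl( \shG_E \bigr) \hookrightarrow \shG \to
		\Gammast(\shG_P) \twoheadrightarrow \fl \shH_X^1 \bigl( \shG_E \bigr)
\]
together with isomorphisms $\fl \shH_X^i \bigl( \shG_E \bigr) \simeq R^i \Gammast(\shG_P)$ for $i \geq 2$. So the only thing that needs to be supplied is an identification of the local cohomology sheaves $\fl \shH_X^p \bigl( \shG_E \bigr)$ with $D^p \bigl( \shGd \bigr)$ when $\shG$ is Cohen-Macaulay of dimension $d$.

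The key step is to run the spectral sequence of Proposition~\ref{prop:local-duality}, namely
\[
	E_2^{p,q} = D^p \Bigl( \shExt_{\shS}^{-q} \bigl( \shG, \omegaS \bigr) \Bigr)
		\Longrightarrow \shH_X^{d+p+q} \bigl( \shG_E \bigr),
\]
and observe that the Cohen-Macaulay hypothesis forces $\shExt_{\shS}^{-q}(\shG, \omegaS)$ to vanish unless $-q = d$. Indeed $\omegaS = \det \shEdual \tensor \shS(-d)$ differs from $\shS$ only by tensoring with a line bundle and a shift, so $\shExt_{\shS}^q(\shG,\omegaS) \simeq \shExt_{\shS}^q(\shG, \shS) \tensor \det \shEdual(-d)$, and the Cohen-Macaulay condition (the support of $\shG_E$ being purely $d$-dimensional and $\shG_E$ Cohen-Macaulay) gives $\shExt_{\shS}^q(\shG, \shS) = 0$ for $q \neq d = \rk \shE = \codim$ of the support. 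Hence the $E_2$-page has a single nonzero row, $q = -d$, the spectral sequence degenerates, and one reads off
\[
	\fl \shH_X^p \bigl( \shG_E \bigr) \simeq D^p \bigl( \shGd \bigr), \qquad \shGd = \shExt_{\shS}^d(\shG, \omegaS),
\]
for all $p \geq 0$, compatibly with the graded $\shS$-module structures (both propositions are functorial, so the identifications respect gradings). Substituting these isomorphisms into the sequence and isomorphisms coming from Proposition~\ref{prop:local-cohomology} yields exactly the statement: the four-term exact sequence $D(\shGd) \hookrightarrow \shG \to \Gammast(\shG_P) \twoheadrightarrow D^1(\shGd)$ and the isomorphisms $D^i(\shGd) \simeq R^i \Gammast(\shG_P)$ for $i \geq 2$.

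I do not expect a serious obstacle here, since this is essentially an assembly of results already proved. The one point requiring a little care is the translation of the abstract Cohen-Macaulay hypothesis on the sheaf $\shG_E$ over $E$ into the vanishing of the relevant $\shExt_{\shS}$ sheaves on $X$: one should check locally on an affine $U = \Spec A$ trivializing $E$, where $\shS$ becomes $A[t_1,\dots,t_d]$ and $\shExt$ over $\shS$ matches $\Ext$ of the corresponding graded module, so that the standard commutative-algebra fact—that for a Cohen-Macaulay module of dimension equal to $\dim A + d$ over this polynomial ring, or rather with support of codimension $d$, the Ext modules against the ring are concentrated in a single degree—applies. The rest is bookkeeping with the line bundle $\det \shEdual$ and the shift $(-d)$ in the definition of $\omegaS$ and $\shGd$.
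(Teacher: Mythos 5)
Your argument is exactly the one given in \subsecref{subsec:loc-dual-CM}: the Cohen--Macaulay hypothesis collapses the spectral sequence of Proposition~\ref{prop:local-duality} to a single row, yielding $\fl \shH_X^p(\shG_E) \simeq D^p(\shGd)$, and substituting this into the sequence of Proposition~\ref{prop:local-cohomology} gives Theorem~\ref{thm:CM-duality}. The one slip is in your final paragraph, where ``dimension $\dim A + d$'' and ``codimension $d$'' are not equivalent; what is actually used (and what makes $\shExt_{\shS}^q(\shG,\shS)$ concentrate in degree $d$) is that in the intended application the support of $\shG_E$ has codimension $d = \dim X$ in $E$, so this does not affect the result.
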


\section{Polarized Hodge modules}

From now on, let $X$ be a nonsingular complex algebraic variety of dimension $d \geq
1$. The cotangent bundle $E = T_X^{\ast}$ is then a vector bundle of rank $d$ on
$X$; as in \subsecref{subsec:loc-coh-vb}, we let $P = \PP(\Theta_X)$ be its
projectivization, $\pi \colon P \to X$ the natural map, and $\OP(1)$ the universal
line bundle on $P$.  Also let $\shS = \Sym \Theta_X$. Note that $\det \shE =
\omegaX$, and so $\omegaS = \omegaX^{-1} \tensor_{\OX} \shS(-d)$.

\subsection{The Cohen-Macaulay property}
\label{subsec:CM}

Let $M$ be a polarized Hodge module on $X$ of weight $w$. We write $(\Mmod, F)$ for the
underlying filtered holonomic \emph{left} $\Dmod$-module, and consider the graded $\shS$-module
\[
	\shG = \Gr^F \! \Mmod = \bigoplus_{k \in \ZZ} \Gr_k^F \! \Mmod.
\]
As before, $\shG_E$ is the corresponding coherent sheaf on $E$, and $\shG_P =
\shC(\Mmod, F)$ what might be called the ``characteristic sheaf'' of the
$\Dmod$-module, defined on $P$.  The support of the sheaf $\shG_E$ is the characteristic
variety of the $\Dmod$-module \cite{Borel}*{p.~212-3}; it is a cone in $E$, and the
support of $\shG_P$ is the projectivization of that cone.
Since $\Mmod$ is holonomic, its characteristic variety is of pure dimension $d$. But
because $M$ is a Hodge module, much more is true: in fact, Saito has shown that
$\shG$ is always a Cohen-Macaulay module over $\shS$ (in \cite{SaitoHM}*{Lemme~5.1.13 on
p.~958}). Consequently, the sheaf $\shG_E$ is Cohen-Macaulay of dimension $d$ on $E$,
and so Theorem~\ref{thm:CM-duality} may be applied to it.

Moreover, the dual $\shGd = \shExt_{\shS}^d \bigl( \shG, \omegaS \bigr) = 
\shExt_{\shS}^d \bigl( \shG, \shS(-d) \bigr) \tensor \omegaX^{-1}$ can be computed
explicitely in this case, since $M$ is polarized. To do this, we need the following
bit of notation: For a graded $\shS$-module $\shG$, we define $\shGr$ to be the same
graded $\OX$-module as $\shG$, but with the action of $\shS$ changed so that sections
of $\shS_k$ act with an extra factor of $(-1)^k$. (This corresponds to pulling
$\shG_E$ back by the involution $e \mapsto -e$ of the cotangent bundle.)
Evidently, we have $\shGr \simeq \shG \tensor_{\shS} \shSr$.

Now let $M' = \DDX(M)$ be the Verdier dual of the Hodge module, and $(\Mmod', F) =
\DDX(\Mmod, F)$ the underlying filtered left $\Dmod$-module. According to
\cite{Saito-on}*{p.~54--5}, we have
\[
	\Mmod' = \shExt_{\Dmod_X}^d \bigl( \Mmod, \Dmod_X \tensor_{\OX} \omegaX^{-1} \bigr),
\]
where the filtration on $\Dmod_X \tensor \omegaX^{-1}$ is given by $F_p \bigl(
\Dmod_X \tensor \omegaX^{-1} \bigr) = F_{p-2d} \Dmod_X \tensor \omegaX^{-1}$. Because
of strictness (which is equivalent to the Cohen-Macaulay property), we can pass to
the associated graded modules to obtain
\[
	\Gr^F \! \Mmod' = \shExt_{\shS}^d \bigl( \shGr, \shS(-2d) \bigr) \tensor \omegaX^{-1}
		\simeq \shGrd(-d).
\]
The change in module structure from $\shG$ to $\shGr$ happens because, in computing
$\Mmod'$, one is really passing from $\Mmod$ to
the associated right $\Dmod$-module $\omegaX \tensor_{\OX} \Mmod$, and the right action of
$\Theta_X$ on $\omegaX \tensor \Mmod$ is given by the rule
\[
	(\omega \tensor m) \cdot \xi = (\omega \cdot \xi) \tensor m 
		- \omega \tensor (\xi m),
\]
thus introducing an additional sign when passing to the graded module.

A polarization on $M$ is an isomorphism $M \simeq \DDX(M)(-w)$, where $w$ is the
weight of $M$. If $M$ is polarized, we thus have
\[
	(\Mmod, F) \simeq \DDX(\Mmod, F)(-w) = \bigl( \Mmod', F_{\bullet + w} \bigr).
\]	
When combined with the isomorphism above, this gives
\begin{equation} \label{eq:Saito-pol}
	\shG \simeq \bigl( \Gr^F \! \Mmod' \bigr)(w) \simeq \shGrd(w-d),
\end{equation}
or in other words, $\shGd \simeq \shGr(d-w)$.

\subsection{Duality for polarized Hodge modules}
\label{subsec:duality}

We now obtain from Theorem~\ref{thm:CM-duality} the following result about polarized Hodge modules.

\begin{theorem} \label{thm:HM}
Let $M$ be a polarized Hodge module of weight $w = d + n$ on the nonsingular $d$-dimensional
complex algebraic variety $X$. Let $(\Mmod, F)$ be the underlying filtered
left $\Dmod$-module, and write $\shG = \Gr^F \! \Mmod$ for the associated graded $\shS$-module.
Also let $\shC = \shC(\Mmod, F)$ be the corresponding coherent sheaf on $P =
\PP(\Theta_X)$. Then there is an exact sequence
\begin{equation} \label{eq:HM-seq}
\begin{diagram}[l>=2em]
	D \bigl( \shGr(-n) \bigr) &\rIntoBold& \shG &\rTo& 
	\Gammast(\shC) &\rOnto^{\hspace{1.8em}}& D^1 \bigl( \shGr(-n) \bigr)
\end{diagram}
\end{equation}
of graded $\shS$-modules on $X$. Similarly, for each $i \geq 2$, we have an isomorphism
\[
	D^i \bigl( \shGr(-n) \bigr) \simeq R^i \Gammast(\shC)
\]
of graded $\shS$-modules.
\end{theorem}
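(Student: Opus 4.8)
The plan is to obtain Theorem~\ref{thm:HM} as a direct specialization of Theorem~\ref{thm:CM-duality}, applied to the vector bundle $E = T_X^{\ast}$ and to the graded $\shS$-module $\shG = \Gr^F \! \Mmod$. The first ingredient is Saito's Cohen-Macaulay theorem: since $M$ is a polarized Hodge module, the associated graded $\shS$-module $\shG$ is Cohen-Macaulay of dimension $d = \rk \shE$, so the hypotheses of Theorem~\ref{thm:CM-duality} are satisfied. The second ingredient is the identification of the dual module $\shGd = \shExt_{\shS}^d(\shG, \omegaS)$, which was carried out in \subsecref{subsec:CM}: combining Saito's formula for the filtered $\Dmod$-module underlying the Verdier dual with the polarization isomorphism $(\Mmod, F) \simeq \DDX(\Mmod, F)(-w)$ gives the identity $\shGd \simeq \shGr(d - w)$ of \eqref{eq:Saito-pol}. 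Substituting $w = d + n$ yields $\shGd \simeq \shGr(-n)$.

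With these two facts in hand, I would simply plug $\shGd \simeq \shGr(-n)$ into the conclusion of Theorem~\ref{thm:CM-duality}. The exact sequence
\begin{diagram}[l>=2em]
	D \bigl( \shGd \bigr) &\rIntoBold& \shG &\rTo&
	\Gammast(\shG_P) &\rOnto& D^1 \bigl( \shGd \bigr)
\end{diagram}
becomes \eqref{eq:HM-seq} once we rewrite $D(\shGd) = D(\shGr(-n))$ and $D^1(\shGd) = D^1(\shGr(-n))$, and recall that $\shG_P = \shC(\Mmod, F) = \shC$ is by definition the characteristic sheaf on $P$. Likewise the isomorphisms $D^i(\shGd) \simeq R^i \Gammast(\shG_P)$ for $i \geq 2$ become the stated isomorphisms $D^i(\shGr(-n)) \simeq R^i \Gammast(\shC)$. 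All of the maps are maps of graded $\shS$-modules by the corresponding assertions in Theorem~\ref{thm:CM-duality}, so this is essentially a matter of bookkeeping and translating notation.

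Strictly speaking there is essentially no obstacle left once \subsecref{subsec:CM} has been set up; the only point requiring a word of care is that the chain of isomorphisms $\shGd \simeq \shGrd(d-w)$, equivalently $\shGd \simeq \shGr(d-w)$, was derived under the identification $\Gr^F \! \Mmod' \simeq \shGrd(-d)$, which in turn relied on \emph{strictness} of the Verdier dual filtered $\Dmod$-module — and strictness is precisely what the Cohen-Macaulay property provides. So the two inputs (Cohen-Macaulay and the explicit dual) are not independent, but both flow from Saito's theory, and I would state the proof as: apply Theorem~\ref{thm:CM-duality} to $\shG = \Gr^F \! \Mmod$, which is Cohen-Macaulay of dimension $d$ by \cite{SaitoHM}*{Lemme~5.1.13}, and use \eqref{eq:Saito-pol} together with $w = d + n$ to replace $\shGd$ by $\shGr(-n)$ throughout. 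The identification of the first map with the polarization, and the promotion to an exact triangle, are deferred to \subsecsref{subsec:polarization}{subsec:derived} and need not be addressed here.
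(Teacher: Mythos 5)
Your proposal matches the paper's own proof exactly: the paper derives Theorem~\ref{thm:HM} from Theorem~\ref{thm:CM-duality} after establishing in \subsecref{subsec:CM} that $\shG$ is Cohen--Macaulay (Saito) and that $\shGd \simeq \shGr(d-w) = \shGr(-n)$ via \eqref{eq:Saito-pol}. Your remark that both ingredients flow from Saito's theory via strictness is also the paper's perspective.
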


The graded $\shS$-module $D^i \bigl( \shGr(-n) \bigr)$ is easily described.
Indeed, for any integer $k$, its graded piece in degree $k$ is
\[
	D^i \bigl( \shGr(-n) \bigr)_k = 
		\shExt_{\OX}^i \bigl( \shG_{-n-k}, \OX \bigr) =
		\shExt_{\OX}^i \bigl( \Gr_{-n-k}^F \! \Mmod, \OX \bigr).
\]
We thus get, for each value of $k$, an exact sequence
\begin{equation} \label{eq:SESk}
\begin{diagram}[l>=1.8em]
	\shHom \bigl( \shG_{-n-k}, \OX \bigr) &\rIntoBold&
	\shG_k &\rTo& \pil \bigl( \shC \tensor \OP(k) \bigr)
	&\rOnto& \shExt^1 \bigl( \shG_{-n-k}, \OX \bigr).
\end{diagram}
\end{equation}

\begin{note}
In the case of $\Mvan$, we have $w = \dfamX = n+d$, with $n$ the dimension of the
hyperplane sections, and so we recover precisely the result mentioned in
\subsecref{subsec:background}.
\end{note}

\subsection{The role of the polarization}
\label{subsec:polarization}

The derivation of Theorem~\ref{thm:HM} shows that the map $D \bigl( \shGr(-n) \bigr)
\to \shG$ in the exact sequence \eqref{eq:HM-seq} is induced by the polarization of
the Hodge module; in fact, the isomorphism $M \simeq \DDX(M)(-w)$ is exactly what was
used to pass from Proposition~\ref{prop:local-duality} to Theorem~\ref{thm:HM}. To
see this more clearly, we first consider the case when $M$ comes from a polarized
variation of Hodge structure.

So let $\bigl( \LshO, \nabla, F, \LQQ, S \bigr)$ be a polarized variation of Hodge
structure of weight $n$. The flat connection $\nabla$ makes $\LshO$ into a left
$\Dmod$-module, which we denote by $\Mmod$; it is filtered by setting $F_k \Mmod =
F^{-k} \LshO$, because of Griffiths transversality. In this special case, the graded
$\shS$-module $\shG = \Gr^F \! \Mmod$ is of finite length.

Now consider the polarization $S \colon \LQQ \tensor \LQQ \to \QQ(-n)$ of the variation.
By definition, we have $S \bigl( F^p \LshO, F^q \LshO \bigr) = 0$ for $p + q > n$;
thus $S$ descends to a non-degenerate bilinear pairing between $\Gr_F^{-k} \LshO$ and
$\Gr_F^{n+k} \LshO$ for all $k$. We get an isomorphism
\begin{equation} \label{eq:iso-VHS}
	\bigoplus_{k \in \ZZ} \Gr_k^F \! \Mmod \simeq 
		\bigoplus_{k \in \ZZ} \shHom_{\OX} \bigl( \Gr_{-n-k}^F \! \Mmod, \OX \bigr).
\end{equation}
Moreover, $S$ is flat for the connection $\nabla$, and so
\[
	d S(\lambda_1, \lambda_2) = S(\nabla \lambda_1, \lambda_2) 
		+ S(\lambda_1, \nabla \lambda_2)
\]
for all sections $\lambda_1, \lambda_2$ of $\LshO$. When $\lambda_1$ is a section
of $\Gr_{k-1}^F \! \Mmod = \Gr_F^{-k+1} \LshO$, and $\lambda_2$ a section of $\Gr_{-n-k}^F
\Mmod = \Gr_F^{n+k} \LshO$, we therefore have
\[
	0 = \xi \cdot S(\lambda_1, \lambda_2) = S(\xi \cdot \lambda_1, \lambda_2)
		+ S(\lambda_1, \xi \cdot \lambda_2)
\]
for arbitrary vector fields $\xi$. This shows that \eqref{eq:iso-VHS} is compatible
with the action by $\shS$, provided that sections of $\shS_k$ act on the
right-hand side with an extra factor of $(-1)^k$. In the notation used in
\subsecref{subsec:CM}, the
polarization therefore determines an isomorphism of graded $\shS$-modules $\shG \simeq
D \bigl( \shGr(-n) \bigr)$.

Let $M$ be the polarized Hodge module associated to the variation
\cite{SaitoHM}*{Theorem~5.4.3 on p.~989}; its weight is $w =
d + n$. As expected, the map in \eqref{eq:HM-seq} is the one given by the
polarization $S$, up to a sign factor.

\begin{lemma} \label{lem:polarization}
Let $M$ be the Hodge module associated to a polarized variation of Hodge structure
$\bigl( \LshO, \nabla, F, \LQQ, S \bigr)$ of weight $n$, with $S \colon \LQQ \tensor \LQQ \to
\QQ(-n)$ the polarization. Then the map $D \bigl( \shGr(-n) \bigr) \to \shG$ in
\eqref{eq:HM-seq} is an isomorphism, whose inverse
\[
	\bigoplus_{k \in \ZZ} \Gr_F^k \LshO \to
		\bigoplus_{k \in \ZZ} \shHom_{\OX} \bigl( \Gr_F^{n-k} \LshO, \OX \bigr),
\]
is given by the formula
\[
	\lambda \mapsto (-1)^{d(d-1)/2} S(\lambda, \argbl)
\]
for $\lambda \in \Gr_F^k \LshO$.
\end{lemma}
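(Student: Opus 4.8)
The plan is to pin down a single sign, since everything else is already in place. The assertion that the map is an isomorphism is formal: for a polarized variation on all of $X$, the filtered $\Dmod$-module $(\Mmod,F)$ has characteristic variety equal to the zero section of $T_X^{\ast}$, so $\shG=\Gr^F\!\Mmod$ has finite length over $\shS$ and $\shG_E$ is supported on the zero section; hence $\shC=0$, so $\Gammast(\shC)=0$, while $D^i\bigl(\shGr(-n)\bigr)=0$ for $i\geq 1$ because the Hodge bundles $\Gr_k^F\!\Mmod$ are locally free. Thus \eqref{eq:HM-seq} collapses to the stated isomorphism. By the discussion preceding the statement, the polarization $S$ by itself produces an isomorphism of graded $\shS$-modules $\psi\colon\shG\to D\bigl(\shGr(-n)\bigr)$, namely $\psi(\lambda)=S(\lambda,\argbl)$ on $\Gr_k^F\!\Mmod=\Gr_F^{-k}\LshO$; the claim is that the map of \eqref{eq:HM-seq} is $(-1)^{d(d-1)/2}\psi^{-1}$, and all that remains is to compute this constant in local coordinates.

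To do so I would recall how the map of \eqref{eq:HM-seq} was assembled: it is the composite of the identification $D\bigl(\shGr(-n)\bigr)\xrightarrow{\,\sim\,}D\bigl(\shGd\bigr)$ supplied by \eqref{eq:Saito-pol}, followed by the map $D\bigl(\shGd\bigr)\to\shG$ of Theorem~\ref{thm:CM-duality}; in the present finite-length situation the latter is simply the inverse of the local-duality isomorphism $\shG=\fl\shH_X^0\bigl(\shG_E\bigr)\xrightarrow{\,\sim\,}D^0\bigl(\shGd\bigr)$ of Proposition~\ref{prop:local-duality}, which rests on the description of $\fl\shH_X^d\bigl(\shOE\bigr)$ in the proof of Lemma~\ref{lem:res-basic}. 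Work over an affine open $U$ with local coordinates $x_1,\dots,x_d$, so that $\Theta_X$ has frame $\partial_1,\dots,\partial_d$, the cotangent bundle $E$ has fibre coordinates $t_i=\partial_i$, and $\omegaX=\det\shE$ is trivialized by $\varpi=dx_1\wedge\dots\wedge dx_d$: there the generator of $\fl\shH_X^d\bigl(\shOE\bigr)$ in degree $-d$ is the \v{C}ech class of $(t_1\dots t_d)^{-1}$, matched with $\varpi$, and the local-duality pairing is the explicit residue pairing of Lemma~\ref{lem:res-basic}. On the other side, the isomorphism $\shGd\simeq\shGr(-n)$ of \eqref{eq:Saito-pol} unfolds into Saito's formula $\Mmod'=\shExt_{\Dmod_X}^d\bigl(\Mmod,\Dmod_X\tensor_{\OX}\omegaX^{-1}\bigr)$, which one computes from the twisted Spencer (Koszul) resolution of $\Mmod$ along $\partial_1,\dots,\partial_d$ and then passes to $\Gr^F$ — contributing the Koszul generator $\partial_1\wedge\dots\wedge\partial_d\in\det\Theta_X=\omegaX^{-1}$ — followed by the polarization isomorphism $\Mmod\simeq\Mmod'(-w)$, which transports $S$ but introduces no further sign. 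Composing, one finds that $\psi^{-1}\circ(\text{map of \eqref{eq:HM-seq}})$ is multiplication by the single sign relating the Koszul normalization $\partial_1\wedge\dots\wedge\partial_d$ of the top of the Spencer complex to the \v{C}ech normalization $(t_1\dots t_d)^{-1}$ of the top of $\fl\shH_X^d\bigl(\shOE\bigr)$, both measured against $\varpi$.

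That sign is $(-1)^{d(d-1)/2}$: the differentials of the Koszul complex $K_\bullet(\partial_1,\dots,\partial_d)$ carry the alternating signs $(-1)^{i-1}$, so passing to the degree-$d$ term introduces the factor $(-1)^{0+1+\dots+(d-1)}=(-1)^{d(d-1)/2}$ relative to the unsigned \v{C}ech differential built from the cover $\{t_i\neq 0\}$ used in Lemma~\ref{lem:res-basic}; equivalently this is the sign of the permutation reversing $(1,\dots,d)$, which is exactly what intervenes when matching $\partial_1\wedge\dots\wedge\partial_d$ against the \v{C}ech class of $(t_1\dots t_d)^{-1}$. The genuinely delicate point — and the main obstacle — is precisely this bookkeeping: reconciling the sign conventions of the Spencer/Koszul resolution behind Saito's duality formula with those of the \v{C}ech complex behind Lemma~\ref{lem:res-basic}, and checking that the side-changing $\Mmod\leadsto\omegaX\tensor\Mmod$ contributes nothing beyond the $(-1)^k$-twist $\shG\leadsto\shGr$ already accounted for in \subsecref{subsec:CM}; the rest of the argument is formal. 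A useful sanity check is to take for $M$ the constant variation $\QQ_X(-p)$ of weight $n=2p$: then $\shG=\OX$ is concentrated in a single degree and $S(1,1)=1$, so the comparison collapses to the lone Koszul-versus-\v{C}ech sign and one reads off $(-1)^{d(d-1)/2}$ at once.
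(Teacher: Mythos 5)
The first half of your argument — that the map in \eqref{eq:HM-seq} is an isomorphism because the characteristic variety is the zero section, so $\shC = 0$ and $D^i\bigl(\shGr(-n)\bigr)=0$ for $i\geq 1$ by local freeness of the Hodge bundles — is correct and matches the opening of the paper's proof exactly.

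The sign computation, however, rests on a premise that contradicts the paper's account, and I believe it is simply wrong. You assert that the polarization isomorphism $\Mmod\simeq\Mmod'(-w)$ ``transports $S$ but introduces no further sign,'' and then locate the factor $(-1)^{d(d-1)/2}$ in a Koszul-versus-\v{C}ech normalization mismatch inside the local-duality step. The paper does the opposite: it cites Saito's Lemme~5.4.2, which states that the polarization of the associated Hodge module, viewed as a filtered quasi-isomorphism of Spencer complexes, is \emph{already} $(-1)^{d(d-1)/2}S$ — the sign is built into Saito's formula at the level of $\Dmod$-modules, before any local duality enters. The entire content of \subsecref{subsec:pol-proof} is then to trace this quasi-isomorphism through the passage to graded modules and through the local-duality identification $\shG\simeq D\bigl(\shGd\bigr)$, and to verify that \emph{no additional sign appears} in those steps; the $(-1)^{d(d-1)/2}$ in the final formula is the same one Saito put in, carried through unchanged. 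Your account and the paper's cannot both be right: if Saito's polarization already carries $(-1)^{d(d-1)/2}$ and local duality also contributed $(-1)^{d(d-1)/2}$ as you claim, the two would cancel and the final formula would have no sign at all.

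There is a secondary gap as well: the Koszul-vs-\v{C}ech sign argument you sketch is not actually carried out. The assertion that ``the differentials of the Koszul complex carry the alternating signs $(-1)^{i-1}$, so passing to the degree-$d$ term introduces the factor $(-1)^{0+1+\dots+(d-1)}$'' is not a valid way to extract a sign — both complexes compute the same cohomology, and any putative sign depends on a careful matching of explicit generators through the identification, which you never make. Your sanity check with $\QQ_X(-p)$ does not resolve this either, since it does not independently determine which side the sign should land on. To fix the proof you should replace the Koszul-vs-\v{C}ech heuristic with the paper's actual strategy: start from Saito's signed polarization quasi-isomorphism on the Spencer complex, pass to the associated graded resolution $\shF^{\bullet}$ by basic $\shS$-modules, apply $D$ and the explicit description of $\fl\shH_X^d(\shOE)$ from Lemma~\ref{lem:res-basic}, and verify at each stage that the identification is sign-free.
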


The proof of this lemma is given in \subsecref{subsec:pol-proof} below. 

Now we return to the case of a general polarized Hodge module $M$ of weight $w = d +
n$. There is always a dense Zariski-open subset $U \subseteq X$ on which $M$ is the
Hodge module associated to a polarized variation of Hodge structure of weight $n$
\cite{SaitoHM}*{Lemme~5.1.10 on p.~957}. Over $U$, therefore, the map $D \bigl(
\shGr(-n) \bigr) \to \shG$ is the one in Lemma~\ref{lem:polarization}. But then the
same has to be true on all of $X$. In other words, through the first map in
\eqref{eq:SESk}, a local section $\sigma$ of
$\shHom(\Gr_{-n-k}^F \! \Mmod, \OX)$ determines a local section $i(\sigma)$ of
$\Gr_k^F \! \Mmod$. Lemma~\ref{lem:polarization} shows that, at least at points of $U$, we
have
\[
	(-1)^{d(d-1)/2} S \bigl( i(\sigma), m \bigr) = \sigma(m)
\]
for every local section $m$ of $\Gr_{-n-k}^F \! \Mmod$. But since both sides are
holomorphic, and $U$ is dense in $X$, this identity has to hold at points of $X
\setminus U$ as well.

A different way to think about this is the following. Over $U$, any section of
$\Gr_k^F \! \Mmod$ determines a linear functional on $\Gr_{-n-k}^F \! \Mmod$. We can
thus think of the sheaf $\pil \bigl( \shC(\Mmod, F) \tensor \OP(k) \bigr)$ in
\eqref{eq:SESk}, whose support is contained in the complement of $U$, as giving the
obstructions for that functional to extend over points of $X \setminus U$.

\subsection{Proof of the lemma}
\label{subsec:pol-proof}

The first assertion in Lemma~\ref{lem:polarization} is very easy to prove. Indeed,
the characteristic variety of $\Mmod$ is the zero section, and $\shG$ has finite
length as an $\shS$-module. We thus have $\shC(\Mmod, F) = 0$, and so the map $D
\bigl( \shGd \bigr) \to \shG$ in Theorem~\ref{thm:CM-duality} is an isomorphism in
this case. It follows that the first map in \eqref{eq:HM-seq} is also an isomorphism. 
Now, given Saito's description of the polarization in \cite{SaitoHM}*{Lemme~5.4.2 on
p.~989}, it is certainly believable that the isomorphism should be given by $(-1)^{d(d-1)/2}
S$ as in \eqref{eq:iso-VHS}; however, it seemed advantageous to write down a more detailed
proof. This is the purpose of the present section; it involves looking more closely
at Saito's construction.

Since it becomes necessary to use both left and right $\Dmod$-modules
here, we shall introduce the following notation. As in \subsecref{subsec:polarization}, the
filtered left $\Dmod$-module determined by $\bigl( \LshO, \nabla, F \bigr)$ will be
denoted by $(\Mmod, F)$, and the associated graded $\shS$-module by
$\shG = \Gr^F \! \Mmod$. The corresponding right $\Dmod$-module is then $\Nmod =
\omega_X \tensor_{\OX} \Mmod$, with $\Dmod$-module structure defined by the rule
\begin{equation} \label{eq:left-right}
	(\omega \tensor m) \cdot \xi = 
		(\omega \cdot \xi) \tensor m - \omega \tensor (\xi \cdot m),
\end{equation}
for $\xi$ any section of $\Theta_X$. The filtration is given by $F_p \Nmod = \omega_X
\tensor F_{p+d} \Mmod$; together with \eqref{eq:left-right}, this shows that
\[
	\Gr^F \! \Nmod \simeq \omega_X \tensor_{\OX} \shGr(d),
\]
in the notation of \subsecref{subsec:CM}.

Saito proves that $(\Nmod, F)$ has a canonical resolution by induced $\Dmod$-modules
\cite{SaitoHM}*{Lemme~2.1.6 on p.~877}.
It is constructed by taking the Spencer complex $\Spenc(\Nmod, F)$ (isomorphic to the
de Rham complex $\DR(\Mmod, F)$ for the original left $\Dmod$-module), and tensoring
on the right by $(\Dmod_X, F)$; the augmentation map
\[
	\Spenc(\Nmod, F) \tensor_{\OX} (\Dmod_X, F) \to (\Nmod, F)
\]
is a filtered quasi-isomorphism. The associated graded complex
\[
	\Gr^F \Bigl( \Spenc(\Nmod, F) \tensor_{\OX} (\Dmod_X, F) \Bigr)
\]
then provides a canonical resolution of $\Gr^F \! \Nmod$ by basic graded
$\shS$-modules (as in Lemma~\ref{lem:res-basic}), because $\Gr^F \! \Nmod$ is locally
free over $\OX$. We let
\[
	\shF^{\bullet} = 
		\omegaX^{-1} \tensor_{\OX} 
			\Gr^F \Bigl( \Spenc(\Nmod, F) \tensor_{\OX} (\Dmod_X, F) \Bigr)(-d),
\]
which resolves $\shGr$ by basic graded $\shS$-modules.

Saito's construction of the isomorphism $(\Mmod, F) \simeq \DDX(\Mmod, F)(-w)$ is
the following. He shows that $(-1)^{d(d-1)/2} S$ gives a filtered
quasi-isomorphism
\[
	\Spenc(\Nmod, F) \to \shHom_{\OX}^F \Bigl( \Spenc(\Nmod, F), 
		(\omegaX, F) \decal{d} \Bigr)(-w);
\]
note that, in this case only, the filtration on $\omegaX$ is such that $\Gr_k^F \omegaX =
0$ for $k \neq 0$. Passing to induced modules, one gets a filtered quasi-isomorphism
\[
	\Spenc(\Nmod, F) \tensor (\Dmod_X, F) \to
		\shHom_{\Dmod_X} \Bigl( \Spenc(\Nmod, F) \tensor (\Dmod_X, F),
			(\omega_X \tensor \Dmod_X, F) \decal{d} \Bigr)(-w).
\]
Here $\omega_X \tensor_{\OX} \Dmod_X$ has two different structures as a right
$\Dmod$-module; one is used when applying $\shHom_{\Dmod_X}(\argbl, \argbl)$, and the other to make
the right-hand side into a complex of right $\Dmod$-modules. Since $\Gr^F \! \Nmod$
is locally free over $\OX$, that complex computes the Verdier
dual $\DDX(\Nmod, F)(-w)$; seeing that the left-hand side is quasi-isomorphic to $(\Nmod,
F)$, one has the desired polarization, on the level of filtered right
$\Dmod$-modules.

Using the strictness property of the right-hand side (again because $\shG$ is
Cohen-Macaulay), we can now pass to the associated graded complexes. Noting that a
Tate twist operates by $\bigl( \Mmod, F \bigr)(-w) = \bigl( \Mmod, F_{\bullet + w}
\bigr)$, we obtain a quasi-isomorphism
\[
	\omegaX \tensor \shF^{\bullet}(d) \to 
		\shHom_{\shS} \Bigl( \omegaX \tensor \shF^{\bullet}(d), 
			\omega_X \tensor \shS(w) \decal{d} \Bigr) \tensor_{\shS} \shSr;
\]
the change in module structure by $\shSr$ happens because of
the two different $\Dmod$-module structures on $\omegaX \tensor_{\OX} \Dmod_X$. After some
cancellation, and with the abbreviation $\omegaS = \omegaX^{-1}
\tensor \shS(-d)$, therefore,
\begin{equation} \label{eq:pol-quasi}
	\shF^{\bullet} \tensor_{\shS} \shSr \to 
		\shHom_{\shS} \Bigl( \shF^{\bullet}(-n), \omegaS \decal{d} \Bigr)
\end{equation}
is also a quasi-isomorphism. Note that it is still induced by $(-1)^{d(d-1)/2}
S$.

As we observed before, the complex $\shF^{\bullet} \tensor_{\shS} \shSr$ on the
left-hand side is a resolution of $\shG$ by basic graded $\shS$-modules. Thus the complex
\[
	D \Bigl( \shHom_{\shS} \bigl( \shF^{\bullet} \tensor_{\shS} \shSr, 
		\omegaS \decal{d} \bigr) \Bigr)
\]
is quasi-isomorphic to $\fl \shH_X^0(\shG_E) \simeq \shG$ by local duality, as in
\subsecref{subsec:loc-dual-vb}. On the other hand, it computes the
$\shS$-module $D \bigl( \shGd \bigr)$, and the isomorphism $D \bigl( \shGd \bigr) \simeq
\shG$ in Proposition~\ref{prop:local-duality} is therefore directly given by that complex.

Returning to \eqref{eq:pol-quasi}, we find that
\[
	D \Bigl( \shHom_{\shS} \bigl( \shF^{\bullet} \tensor_{\shS} \shSr, 
		\omegaS \decal{d} \bigr) \Bigr) \simeq
	D \bigl( \shF^{\bullet}(-n) \bigr) \simeq D \bigl( \shGr(-n) \bigr),
\]
since $\shF^{\bullet}$ resolves $\shGr$. If we compose this isomorphism with the
inverse of $D \bigl( \shGd \bigr) \simeq \shG$, we obtain a map
\[
	\shG \to D \bigl( \shGr(-n) \bigr);
\]
by construction, it is the inverse of the isomorphism in \eqref{eq:HM-seq}. On the
other hand, our derivation shows that it is still given by Saito's formula
$(-1)^{d(d-1)/2} S$, and so the remaining assertion of Lemma~\ref{lem:polarization} is proved.

\section{Several consequences}

\subsection{Derived category formulation}
\label{subsec:derived}

When applying other functors, it is more convenient to have a version of \eqref{eq:HM-seq} in the
derived category; such a version is easily deduced from
\subsecsref{subsec:loc-coh-vb}{subsec:CM}.

Throughout, we will employ the following (mostly standard) notation for derived
categories and derived functors. We let $\Dbqc(\OX)$ be the bounded derived category of
quasi-coherent sheaves on $X$, and $\Db(\OX)$ the full subcategory of objects whose
cohomology sheaves are coherent. Similarly, we write $\Dbqcgr(\shS)$ for the bounded
derived category of quasi-coherent and graded $\shS$-modules, and $\Dbgr(\shS)$ for
the full subcategory of objects whose cohomology sheaves are finitely generated as
$\shS$-modules. As already mentioned, there is an equivalence of categories
\[
	\derR \fl \colon \Dbqc(\shOE) \to \Dbqc(\shS);
\]
for an object $F \in \Dbqc(\shS)$, we write $F_E$ for the corresponding object in
$\Dbqc(\shOE)$, so that $\derR \fl F_E \simeq F$. Similarly, for $G \in
\Dbqcgr(\shS)$, we let $G_P$ be the corresponding object in $\Dbqc(\OP)$. Both
operations are exact functors.

The symbol $\shH^i(\argbl)$ means the $i$-th cohomology sheaf of a complex of
sheaves. The derived functor of the tensor product will be denoted by $\tensorL$.
We write
\[
	\derL D \colon \Dbqcgr(\shS) \to \Dbqcgr(\shS) \op
\]
for the derived functor of the graded dual in \eqref{eq:graded-dual}; we also let
\[
	\derR \Gamma_{\ast} \colon \Db(\OP) \to \Dbgr(\shS)
\]
be the derived functor of $\shF \mapsto \Gammast(\shF) = \bigoplus_{k \in \ZZ} \shF \tensor \OP(k)$.

The results about local cohomology and local duality from
\subsecsref{subsec:loc-coh-vb}{subsec:loc-dual-vb} are easily translated into the
language of derived categories. To begin with, we have the following restatement of
Proposition~\ref{prop:local-cohomology}.

\begin{lemma} \label{lem:der1}
For any object $G \in \Dbgr(\shS)$, there is a functorial exact triangle
\ltriangle{\derR \fl \derR \shH_X^0(G_E)}{G}%
	{\derR \Gammast(G_P)}
in the derived category $\Dbqcgr(\shS)$ of graded, quasi-coherent $\shS$-modules on
$X$.
\end{lemma}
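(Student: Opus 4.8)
\textbf{Proof plan for Lemma~\ref{lem:der1}.}
The plan is to upgrade the exact sequence of Proposition~\ref{prop:local-cohomology} into an exact triangle by identifying the natural morphism $G \to \derR \Gammast(G_P)$ and showing that its cone computes $\derR\fl\,\derR\shH_X^\bullet(G_E)$ with the appropriate degree shift. First I would recall the commutative square with $j\colon E - X \to E$, $q\colon E - X \to P$, $f\colon E\to X$ and $\pi\colon P\to X$ from the proof of Proposition~\ref{prop:local-cohomology}. On the level of derived categories, the fundamental triangle for local cohomology along the closed subset $X \subseteq E$ reads $\derR\shH_X^\bullet(G_E) \to G_E \to \derR\jl\ju G_E \to \derR\shH_X^\bullet(G_E)[1]$, and this is functorial in $G_E \in \Dbqc(\shOE)$. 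Applying the exact functor $\derR\fl$ and using $\derR\fl\, G_E \simeq G$ (the equivalence $\derR\fl$ is exact, as noted in \subsecref{subsec:derived}), I get a functorial triangle
\[
	\derR\fl\,\derR\shH_X^0(G_E) \to G \to \derR\fl\,\derR\jl\ju G_E \to .
\]
The only remaining point is to identify the third term with $\derR\Gammast(G_P)$ compatibly with the morphism $G \to \derR\Gammast(G_P)$ coming from the adjunction unit.

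For that identification I would argue exactly as in the proof of Proposition~\ref{prop:local-cohomology}, but keeping track of complexes rather than sheaves. Since everything is graded, one has $\ju G_E \simeq \qu G_P$ on $E - X$ (this uses that the $\shS$-grading corresponds to the $\mathbb{G}_m$-action whose quotient away from the zero section is $P$), and hence $\derR\jl\ju G_E \simeq \derR\jl\,\qu G_P$. Composing with $\derR\fl$ and using $\fl\jl = \pil\,\ql$ together with the projection formula and the standard computation $\derR\ql\,\shO_{E-X} \simeq \bigoplus_{k\in\ZZ}\OP(k)$, I obtain $\derR\fl\,\derR\jl\ju G_E \simeq \derR\pil\bigl(G_P \tensorL \bigoplus_k \OP(k)\bigr) = \derR\Gammast(G_P)$. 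Here I should note that $G_P$ has bounded coherent cohomology, so these derived functors are all well-defined on $\Db(\OP)$ and land in $\Dbgr(\shS)$, matching the target categories in the statement. Taking $\shH^0$ recovers precisely the middle map $G \to \Gammast(G_P)$ of Proposition~\ref{prop:local-cohomology}, and taking $\shH^i$ for $i\geq 1$ recovers the connecting maps and the isomorphisms $\shH_X^i \simeq R^i\Gammast$; so the triangle is a genuine enhancement of that proposition, not merely an abstract statement with the same cohomology.

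The functoriality claim is automatic: the localization triangle for local cohomology is functorial in the sheaf on $E$, $\derR\fl$ is a functor, and the adjunction unit $\id \to \derR\jl\ju$ is a natural transformation, so the whole triangle is natural in $G$. The main obstacle, such as it is, is bookkeeping rather than mathematics: one must make sure that the identification $\ju G_E \simeq \qu G_P$ and the resulting chain of isomorphisms is carried out on the derived level (using $\tensorL$ and that the sheaves $\OP(k)$ are flat, so $\tensorL = \tensor$ here) and that it is compatible with the maps, so that the resulting triangle genuinely restricts, on cohomology, to the sequence and isomorphisms of Proposition~\ref{prop:local-cohomology}. None of these steps requires anything beyond the SGA2 material already cited and the exactness of $\derR\fl$; I would therefore present the proof briefly, referring back to the proof of Proposition~\ref{prop:local-cohomology} for the computations that are identical.
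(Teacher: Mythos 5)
Your proposal follows exactly the paper's route: start from the localization triangle $\derR\shH_X^0(G_E)\to G_E\to \derR\jl\ju G_E\to$ in $\Dbqc(\shOE)$, apply the exact functor $\derR\fl$, and then re-run the projection-formula computation from Proposition~\ref{prop:local-cohomology} to identify the third term with $\derR\Gammast(G_P)$. The only difference is that you spell out details the paper compresses into ``argue as before,'' so this is a correct proof taking the same approach.
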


\begin{proof}
The result from \cite{SGA2}*{p.~16} that was used in the proof of
Proposition~\ref{prop:local-cohomology} is based on the exact triangle
\ltriangle{\derR \shH_X^0(G_E)}{G_E}%
		{\derR \jl \ju G_E}
in the derived category $\Dbqc(\shOE)$ of quasi-coherent sheaves on $E$. To get the
conclusion, simply apply the functor $\derR \fl$ to that triangle, and then argue as
before.
\end{proof}

In like manner, local duality from Proposition~\ref{prop:local-duality} can be
reformulated as follows. Note that this only works for finitely generated
$\shS$-modules, because of the necessity of resolving by basic $\shS$-modules.

\begin{lemma} \label{lem:der2}
For any object $G \in \Dbgr(\shS)$, there is a functorial isomorphism
\[
	\derR \fl \derR \shH_X^0(G_E) \simeq 
		\derL D \, \derR \shHom_{\shS} \bigl( G, \omegaS \decal{d} \bigr)
\]
in $\Dbqcgr(\shS)$.
\end{lemma}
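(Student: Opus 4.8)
The idea is to lift the argument of Proposition~\ref{prop:local-duality} directly to the derived category. That proposition was proved by choosing a resolution $\shF^{\bullet} \to \shG$ by basic graded $\shS$-modules (Lemma~\ref{lem:resolution}), and then using Lemma~\ref{lem:res-basic} to compute $\fl \shH_X^d(\shF^p_E) \simeq D\bigl(\shHom_\shS(\shF^p,\omegaS)\bigr)$ while all other $\fl\shH_X^i(\shF^p_E)$ vanish. The plan is to make each of these three ingredients functorial at the level of complexes.

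First I would fix, for $G \in \Dbgr(\shS)$, a bounded-above complex $\shF^{\bullet}$ of basic graded $\shS$-modules quasi-isomorphic to $G$; such a resolution exists by Lemma~\ref{lem:resolution} applied degreewise (and can be truncated since $G$ has bounded, finitely generated cohomology, so $\fl\shH_X^i$ vanishes above $d$). Since $\shF^p$ is basic, hence $\shF^p_E$ is a direct sum of line bundles pulled back from $X$, the complex $\shF^{\bullet}_E$ is a complex of $\fl$-acyclic sheaves for the functors $\shH_X^i$ except in degree $d$; more precisely, Lemma~\ref{lem:res-basic} gives a canonical identification $\fl\shH_X^i(\shF^p_E) = 0$ for $i \ne d$ and $\fl\shH_X^d(\shF^p_E) \simeq D\bigl(\shHom_\shS(\shF^p,\omegaS)\bigr)$, natural in $\shF^p$. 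Therefore $\derR\fl\,\derR\shH_X^0(G_E)$, computed via $\shF^{\bullet}_E$, is represented by the single complex $\fl\shH_X^d(\shF^{\bullet}_E)[-d]$, which is canonically $D\bigl(\shHom_\shS(\shF^{\bullet},\omegaS)\bigr)[-d]$. But $\shHom_\shS(\shF^{\bullet},\omegaS)$ computes $\derR\shHom_\shS(G,\omegaS)$ because each $\shF^p$ is locally free over $\OX$ hence $\shHom_\shS(\argbl,\omegaS)$-adapted in the appropriate sense (this is exactly the computation behind the spectral sequence in Proposition~\ref{prop:local-duality}), and $D$ applied to a complex of basic modules computes $\derL D$. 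Collecting the shift $[-d]$ into the Tate-type twist, one gets $\derR\fl\,\derR\shH_X^0(G_E) \simeq \derL D\,\derR\shHom_\shS\bigl(G,\omegaS\decal{d}\bigr)$, which is the assertion.

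The one point requiring care — and the step I expect to be the main obstacle — is functoriality and the compatibility of all the identifications simultaneously: one must check that the isomorphism does not depend on the chosen resolution $\shF^{\bullet}$, and that it is compatible with morphisms in $\Dbgr(\shS)$. The standard way around this is to phrase everything as a composite of derived functors: $\derR\fl\circ\derR\shH_X^0$ on the source, and $\derL D\circ\derR\shHom_\shS(\argbl,\omegaS\decal{d})$ on the target, and produce a natural transformation between them that is an isomorphism on the generating class of basic modules (where it is the content of Lemma~\ref{lem:res-basic}, suitably shifted). Since basic modules generate $\Dbgr(\shS)$ in the sense that every object admits a resolution by them, a natural transformation of triangulated functors that is an isomorphism on this class is an isomorphism everywhere; and naturality is automatic once the transformation is defined functorially on $\shF^{\bullet}$. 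I would spell out just enough of this bookkeeping to pin down the natural transformation — essentially, the edge map of the spectral sequence of Proposition~\ref{prop:local-duality} — and then invoke the degeneration argument (only one nonzero row on the $E_1$-page) to conclude it is an isomorphism. The remaining verifications are the routine identifications of shifts and twists already carried out in \subsecref{subsec:loc-dual-vb}.
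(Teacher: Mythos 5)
Your proposal matches the approach the paper indicates: the paper gives no separate proof of Lemma~\ref{lem:der2}, but explicitly says it is the derived-category reformulation of Proposition~\ref{prop:local-duality}, obtained ``because of the necessity of resolving by basic $\shS$-modules,'' which is exactly what you do — resolve by basic modules, use Lemma~\ref{lem:res-basic} termwise, identify the result with $\derL D$ of $\derR\shHom_\shS(\argbl,\omegaS\decal{d})$, and check that the identification is a natural transformation of derived functors. One small imprecision: the reason the resolution can be taken bounded (so that composing the unbounded derived functors is unproblematic) is that $\shS$ has finite global dimension over a smooth $X$, not that ``$\fl\shH_X^i$ vanishes above $d$'' lets one truncate; the latter controls the cohomological amplitude of the target, not the length of the resolution of the source.
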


By combining the two lemmas with the arguments from
\subsecsref{subsec:loc-dual-CM}{subsec:CM}, we arrive at the following derived-category
version of Theorem~\ref{thm:HM}.

\begin{proposition} \label{prop:derived}
With the assumptions and the notation of Theorem~\ref{thm:HM}, there is an exact triangle
\ltriangle{\derL D \bigl( \shGr(-n) \bigr)}{\shG}%
		{\derR \Gammast \bigl( \shC(\Mmod, F) \bigr)}
in $\Dbqcgr(\shS)$.
\end{proposition}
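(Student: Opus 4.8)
The plan is to assemble Proposition~\ref{prop:derived} from the derived-category
statements already in hand, following the same logical path that led from
Proposition~\ref{prop:local-duality} and Proposition~\ref{prop:local-cohomology} to
Theorem~\ref{thm:HM}, but now at the level of exact triangles rather than exact
sequences. First I would apply Lemma~\ref{lem:der1} to the object $\shG = \Gr^F \!
\Mmod \in \Dbgr(\shS)$. This gives a functorial exact triangle
\[
	\derR \fl \derR \shH_X^0(\shG_E) \to \shG \to \derR \Gammast(\shG_P) \to
\]
in $\Dbqcgr(\shS)$. Since $\shG_P = \shC(\Mmod, F)$ by definition, the middle and
right-hand terms are already the ones appearing in the proposition; it remains to
identify the left-hand term $\derR \fl \derR \shH_X^0(\shG_E)$ with $\derL D \bigl(
\shGr(-n) \bigr)$.

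For this identification I would use Lemma~\ref{lem:der2}, which gives a functorial
isomorphism
\[
	\derR \fl \derR \shH_X^0(\shG_E) \simeq
		\derL D \, \derR \shHom_{\shS} \bigl( \shG, \omegaS \decal{d} \bigr)
\]
in $\Dbqcgr(\shS)$. So the task reduces to showing that $\derR \shHom_{\shS} \bigl(
\shG, \omegaS \decal{d} \bigr) \simeq \shGr(-n)$ in $\Dbqcgr(\shS)$. Here is where the
Cohen-Macaulay property and the polarization enter, exactly as in
\subsecref{subsec:CM}. By Saito's theorem, $\shG$ is Cohen-Macaulay of dimension $d =
\rk \shE$, so $\shExt_{\shS}^q(\shG, \shS) = 0$ for $q \neq d$; this means $\derR
\shHom_{\shS}(\shG, \omegaS)$ has only one nonzero cohomology sheaf, namely $\shGd =
\shExt_{\shS}^d(\shG, \omegaS)$ placed in degree $d$. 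Therefore $\derR
\shHom_{\shS}\bigl(\shG, \omegaS\decal{d}\bigr) \simeq \shGd$ as an object of
$\Dbqcgr(\shS)$ (a single sheaf in degree zero). Finally, the polarization gives the
isomorphism $\shGd \simeq \shGr(d-w) = \shGr(-n)$ established at the end of
\subsecref{subsec:CM} via \eqref{eq:Saito-pol}, using $w = d + n$. Substituting back,
the left-hand term of the triangle becomes $\derL D \bigl( \shGr(-n) \bigr)$, which is
precisely what the proposition asserts.

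Putting the pieces together: the exact triangle of Lemma~\ref{lem:der1}, with its
left-hand vertex rewritten by Lemma~\ref{lem:der2} and then by the Cohen-Macaulay plus
polarization identifications, is the desired triangle
\ltriangle{\derL D \bigl( \shGr(-n) \bigr)}{\shG}
		{\derR \Gammast \bigl( \shC(\Mmod, F) \bigr)}
in $\Dbqcgr(\shS)$. Functoriality is inherited from the functoriality already built
into Lemmas~\ref{lem:der1} and~\ref{lem:der2}. I expect the routine steps to be the
bookkeeping with the grading shift $(-d)$ hidden inside $\omegaS$ and the twist
$\decal{d}$; the only genuinely substantive inputs are Saito's Cohen-Macaulay result
and the polarization isomorphism, and both have already been set up in
\subsecref{subsec:CM}, so the main obstacle is essentially just to check that the
derived-category identifications are compatible with the degeneration of the relevant
spectral sequences—i.e., that ``only one nonzero cohomology sheaf'' really does let us
replace the complex $\derR\shHom_{\shS}(\shG,\omegaS\decal{d})$ by the single module
$\shGd$ without losing the $\shS$-module structure or the functoriality. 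This is a
standard fact about truncation in derived categories, so there should be no real
difficulty.
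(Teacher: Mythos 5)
Your proposal is correct and follows essentially the same route as the paper, which simply states that Proposition~\ref{prop:derived} is obtained ``by combining the two lemmas with the arguments from \subsecsref{subsec:loc-dual-CM}{subsec:CM}''; you have filled in exactly those steps: Lemma~\ref{lem:der1} applied to $\shG$, Lemma~\ref{lem:der2} to rewrite the left vertex, the Cohen-Macaulay property to collapse $\derR\shHom_{\shS}(\shG,\omegaS\decal{d})$ to the single module $\shGd$, and the polarization isomorphism \eqref{eq:Saito-pol} to replace $\shGd$ by $\shGr(-n)$.
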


\subsection{Graded de Rham complexes}
\label{subsec:graded-DR}

We now wish to apply Proposition~\ref{prop:derived} to the study of the de Rham
complex $\DR_X(\Mmod)$ of a filtered $\Dmod$-module $(\Mmod, F)$ underlying a
polarized Hodge module. Since our result gives information about the associated
graded complex, we begin by proving several simple lemmas about the de Rham complex
for general graded $\shS$-modules.

Given any graded $\shS$-module $\shF$, we can form the Koszul complex for the
multiplication map $\Theta_X \tensor_{\OX} \shF \to \shF(1)$, and tensor by
$\omegaX$, to arrive at the Koszul-type complex
\begin{diagram}
	\DR(\shF) = \Bigl\lbrack
		\shF &\rTo& \OmX{1} \tensor \shF(1) &\rTo& \OmX{2} \tensor \shF(2) 
			&\rToDots& \OmX{d} \tensor \shF(d)
	\Bigr\rbrack \decal{d}
\end{diagram}
of graded $\shS$-modules. We call this the \define{de Rham complex} for the graded
module $\shF$; as commonly done, we put it in degrees $-d, \dotsc, 0$,
as indicated by the shift.  Obviously, we have $\DR(\shF) \simeq \DR(\shS)
\tensor_{\shS} \shF$. Since $\shS = \Sym \Theta_X$, the complex $\DR(\shS)$ is a
free resolution of $\omegaX(d) = \omegaX \tensor_{\shS} \shS(d)$, where $\omegaX$
has the trivial $\shS$-module structure; we therefore have
$\DR(\shS) \simeq \omegaX(d)$ in $\Dbgr(\shS)$. It follows that
$\shF \mapsto \DR(\shF) \simeq \DR(\shS) \tensor_{\shS} \shF \simeq \omegaX
\tensorL_{\shS} \shF(d)$ gives rise to an exact functor 
\[
	\DR \colon \Dbqcgr(\shS) \to \Dbqcgr(\shS).
\]

A simple, but useful observation is that the cohomology sheaves of the de Rham
complex are always of finite length as $\shS$-modules.

\begin{lemma} \label{lem:der3}
Let $G \in \Dbqcgr(\shS)$ be any object. Then $\DR(G)_P \simeq 0$ in $\Dbqc(\OP)$.
\end{lemma}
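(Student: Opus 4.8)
The plan is to reduce the statement to a single clean fact: the de Rham complex $\DR(\shS)$ becomes $0$ after the functor $(\argbl)_P$, i.e.\@ on the projective bundle $P = \PP(\Theta_X)$. First I would recall from the preceding discussion that $\DR(G) \simeq \DR(\shS) \tensorL_{\shS} G$ as an object of $\Dbqcgr(\shS)$, because $\DR(\shF) \simeq \DR(\shS) \tensor_{\shS} \shF$ on the level of complexes and $\DR(\shS)$ consists of locally free (hence flat) graded $\shS$-modules. Next I would use that the equivalence-compatible functor $(\argbl)_P \colon \Dbqcgr(\shS) \to \Dbqc(\OP)$ is exact and carries the graded tensor product over $\shS$ to the derived tensor product over $\OP$ in the appropriate sense; more precisely, $\bigl( \DR(\shS) \tensorL_{\shS} G \bigr)_P \simeq \DR(\shS)_P \tensorL_{\OP} G_P$. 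Therefore it suffices to show that $\DR(\shS)_P \simeq 0$ in $\Dbqc(\OP)$.

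For this last point I would argue locally on $X$, say over an affine open $U = \Spec A$ over which $E = T_X^{\ast}$ is trivial, so that $\Gamma(U, \shS) = A[t_1, \dotsc, t_d]$ with the $t_i$ in degree $1$. Here $\DR(\shS)$ is, up to the shift $\decal{d}$ and a twist by $\omegaX$, the Koszul complex on the regular sequence $(t_1, \dotsc, t_d)$ acting by multiplication on $\shS$, which resolves $\shS / (t_1, \dotsc, t_d) \shS(d) \simeq \OX(d)$ with $\OX$ placed in the trivial (degree-zero, $\shS$-torsion) module structure. The key observation is then that a graded $\shS$-module supported set-theoretically on the zero section $X \subseteq E$ — equivalently, annihilated by a power of the irrelevant ideal — corresponds to the zero sheaf on $P = \PProj \shS$, since $\PProj$ only sees the module modulo irrelevant-ideal torsion. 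Concretely, $\omegaX(d)_P = 0$ because $\omegaX(d) = \omegaX \tensor_{\OX} \OX(d)$ with $\OX$ carried by $\shS_0$ and killed by every $\shS_k$, $k \geq 1$; so its associated sheaf on $P$ vanishes. Hence $\DR(\shS) \simeq \omegaX(d)$ in $\Dbgr(\shS)$ gives $\DR(\shS)_P \simeq \omegaX(d)_P \simeq 0$, and the conclusion follows.

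The only mildly delicate point — and the one I would be most careful about — is the compatibility statement $\bigl( \DR(\shS) \tensorL_{\shS} G \bigr)_P \simeq \DR(\shS)_P \tensorL_{\OP} G_P$, i.e.\@ that passing from graded $\shS$-modules to sheaves on $P$ intertwines the two derived tensor products. This is standard (it is the graded analogue of the statement that $M \mapsto M_P$ commutes with $\tensor$ for the Serre twist sheaves, extended to complexes via flat resolutions by basic modules as in Lemma~\ref{lem:resolution}), but since $G$ is only an arbitrary object of $\Dbqcgr(\shS)$ rather than finitely generated, I would phrase it by first reducing to $G$ a single graded $\shS$-module and resolving $\DR(\shS)$ (not $G$) by its own terms, which are already flat; no finiteness on $G$ is then needed. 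Once this is in place, everything else is formal, and the lemma is proved.
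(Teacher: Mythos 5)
Your proof is correct and follows essentially the same route as the paper's: reduce to $\DR(\shS)_P \simeq 0$ via $\DR(G) \simeq \DR(\shS) \tensorL_{\shS} G$ and the compatibility of $(\argbl)_P$ with (derived) tensor products, then observe that $\DR(\shS)$ resolves the $\shS$-torsion module $\omegaX(d)$, whose associated sheaf on $P = \PProj \shS$ vanishes. The paper states these two steps more tersely (and also offers an alternative verification of $\DR(\shS)_P \simeq 0$ via the exterior powers of the Euler sequence on $P$, which you do not need), whereas you spell out why the tensor-product compatibility requires no finiteness hypothesis on $G$ — a small but genuine clarification of a point the paper leaves implicit.
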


\begin{proof}
As a $\shS$-module, $\omegaX(d)$ is torsion, and so its associated coherent sheaf
on $P$ is zero. Thus we also have $\DR(\shS)_P \simeq 0$. Alternatively,
one can consider the Euler sequence
\begin{diagram}
0 &\rTo& \OP &\rTo& \piu \OmX{1} \tensor \OP(1) &\rTo& \Theta_{P/X} &\rTo& 0
\end{diagram}
on the projectivized cotangent bundle $\pi \colon P \to X$, and observe that its
$d$-th wedge product gives an exact complex
\begin{diagram}
	\Bigl\lbrack 
		\OP &\rTo& \piu \OmX{1} \tensor \OP(1) &\rTo& \piu \OmX{2} \tensor \OP(2)
		&\rToDots& \piu \OmX{d} \tensor \OP(d)
	\Bigr\rbrack.
\end{diagram}
It quickly follows that $\DR(\shS)_P$ is also exact, and therefore isomorphic to zero in
$\Db(\OP)$. Either way, we then have
\[
	\DR(G)_P \simeq \Bigl( \DR(\shS) \tensorL_{\shS} G \Bigr)_P \simeq
		\DR(\shS)_P \tensorL_{\OP} G_P \simeq 0,
\]
because the operation $(\argbl)_P$ is compatible with tensor products. 
\end{proof}

The results of \subsecref{subsec:derived} take a very simple form when
applied to a de Rham complex.

\begin{lemma} \label{lem:der4}
For any object $G \in \Dbgr(\shS)$, there are functorial isomorphisms
\[
	\DR(\shS) \tensorL_{\shS} \derL D \, 
		\derR \shHom_{\shS} \bigl( G, \omegaS \decal{d} \bigr) \simeq
	\derL D \, \derR \shHom_{\shS} \bigl( \DR(G), \omegaS \decal{d} \bigr)
		\simeq \DR(G).
\]
\end{lemma}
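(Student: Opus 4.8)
The plan is to deduce both isomorphisms from a single functorial exact triangle together with the vanishing in Lemma~\ref{lem:der3}. Splicing Lemmas~\ref{lem:der1} and \ref{lem:der2} gives, for every $G \in \Dbgr(\shS)$, a functorial exact triangle
\ltriangle{\derL D \, \derR \shHom_{\shS} \bigl( G, \omegaS \decal{d} \bigr)}{G}{\derR \Gammast(G_P)}
in $\Dbqcgr(\shS)$; both assertions of the lemma will be obtained by feeding suitable objects into it, and since every functor in sight is functorial, so will be the resulting isomorphisms.

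First I would prove the second isomorphism. Since $\DR(\shS)$ is a bounded complex of basic graded $\shS$-modules (in the sense of Lemma~\ref{lem:res-basic}), in particular perfect over $\shS$, the de Rham complex $\DR(G) = \DR(\shS) \tensorL_{\shS} G$ again lies in $\Dbgr(\shS)$, so the triangle above may be applied with $G$ replaced by $\DR(G)$. By Lemma~\ref{lem:der3} we have $\DR(G)_P \simeq 0$, hence $\derR \Gammast \bigl( \DR(G)_P \bigr) \simeq 0$, and the triangle degenerates to the desired isomorphism $\derL D \, \derR \shHom_{\shS} \bigl( \DR(G), \omegaS \decal{d} \bigr) \simeq \DR(G)$.

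For the first isomorphism I would instead apply the exact functor $\DR(\argbl) = \DR(\shS) \tensorL_{\shS} (\argbl)$ to the triangle for $G$ itself, obtaining an exact triangle
\ltriangle{\DR(\shS) \tensorL_{\shS} \derL D \, \derR \shHom_{\shS} \bigl( G, \omegaS \decal{d} \bigr)}{\DR(G)}{\DR(\shS) \tensorL_{\shS} \derR \Gammast(G_P)}
in $\Dbqcgr(\shS)$. Together with the second isomorphism, the first one follows once the third term is shown to vanish. To do so, I would use that the equivalence $\derR \fl$ is compatible with derived tensor products, together with the identification $\bigl( \derR \Gammast(G_P) \bigr)_E \simeq \derR \jl \ju G_E$ from the proof of Lemma~\ref{lem:der1}, to write
\[
	\DR(\shS) \tensorL_{\shS} \derR \Gammast(G_P) \simeq
		\derR \fl \bigl( \DR(\shS)_E \tensorL_{\shOE} \derR \jl \ju G_E \bigr);
\]
the projection formula for the open immersion $j \colon E - X \hookrightarrow E$ then rewrites the right-hand side as $\derR \fl \, \derR \jl \bigl( \ju \DR(\shS)_E \tensorL \ju G_E \bigr)$, so it suffices to check that $\ju \DR(\shS)_E \simeq 0$. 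But $\DR(\shS)$ is graded, so $\ju \DR(\shS)_E \simeq \qu \DR(\shS)_P$ as in the proof of Proposition~\ref{prop:local-cohomology}, and $\DR(\shS)_P \simeq 0$ by Lemma~\ref{lem:der3} applied to $G = \shS$ (concretely, the $d$-th wedge power of the Euler sequence on $P$ yields an exact complex). Hence the third term vanishes, which gives $\DR(\shS) \tensorL_{\shS} \derL D \, \derR \shHom_{\shS} \bigl( G, \omegaS \decal{d} \bigr) \simeq \DR(G)$ and, combined with the previous paragraph, the first isomorphism.

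I expect the main obstacle to be exactly the vanishing $\DR(\shS) \tensorL_{\shS} \derR \Gammast(G_P) \simeq 0$ — morally, the de Rham functor annihilates anything coming from $\derR \Gammast$, because $\DR(\shS)$ becomes acyclic after restriction to the complement of the zero section of the cotangent bundle. Once that is in hand via the projection formula, everything else is bookkeeping with the triangle, and functoriality in $G$ is inherited from Lemmas~\ref{lem:der1}--\ref{lem:der3}.
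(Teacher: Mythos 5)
Your argument is correct and uses the same ingredients as the paper (Lemmas~\ref{lem:der1}, \ref{lem:der2}, \ref{lem:der3}, and projection formulas), arriving at the result by essentially the same route; the only cosmetic difference is that the paper applies Lemmas~\ref{lem:der1}--\ref{lem:der2} to $\DR(G)$ and then moves $\DR(\shS)\tensorL_{\shS}(\argbl)$ past $\derR\fl\derR\shH_X^0$, whereas you apply $\DR(\argbl)$ to the triangle for $G$ and show the third term $\DR(\shS)\tensorL_{\shS}\derR\Gammast(G_P)$ vanishes because $\ju\DR(\shS)_E\simeq 0$ — the same projection-formula content, packaged slightly differently.
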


\begin{proof}
Apply Lemma~\ref{lem:der1} to the object $\DR(G)$ to obtain an exact triangle
\ltriangle{\derR \fl \derR \shH_X^0 \bigl( \DR(G)_E \bigr)}{\DR(G)}%
	{\derR \Gammast \bigl( \DR(G)_P \bigr)}
whose third term is isomorphic to zero by Lemma~\ref{lem:der3}. The triangle
therefore degenerates to an isomorphism
\begin{equation} \label{eq:der4-iso}
	\derR \fl \derR \shH_X^0 \bigl( \DR(G)_E \bigr) \simeq \DR(G).
\end{equation}
Lemma~\ref{lem:der2} now implies one half of the assertion. For the other, we note that
\begin{align*}
	\derR \fl \derR \shH_X^0 \bigl( \DR(G)_E \bigr) 
		&\simeq \derR \fl \derR \shH_X^0 \bigl( \DR(\shS)_E \tensorL_{\shOE} G_E \bigr) \\
		&\simeq \DR(\shS) \tensorL_{\shS} \derR \fl \derR \shH_X^0(G_E),
\end{align*}
and then conclude by invoking Lemma~\ref{lem:der2} a second time.
\end{proof}

\subsection{Properties of the de Rham complex}
\label{subsec:deRham}

After these preparations, we can now apply Theorem~\ref{thm:HM} to study the de Rham complex
\begin{diagram}
	\DR_X(\Mmod) = \Bigl\lbrack
		\Mmod &\rTo& \OmX{1} \tensor \Mmod &\rTo& \OmX{2} \tensor \Mmod &\rToDots&
		\OmX{d} \tensor \Mmod
	\Bigr\rbrack \decal{d}
\end{diagram}
of the filtered $\Dmod$-module $(\Mmod, F)$. As in \subsecref{subsec:graded-DR}, the
complex is supported in degrees $-d, \dotsc, 0$. It is naturally filtered by subcomplexes
\begin{diagram}
	F_k \DR_X(\Mmod) = \Bigl\lbrack
		F_k \Mmod &\rTo& \OmX{1} \tensor F_{k+1} \Mmod &\rToDots&
		\OmX{d} \tensor F_{k+n} \Mmod
	\Bigr\rbrack \decal{d};
\end{diagram}
moreover, $\Gr^F \DR_X(\Mmod)$ is a complex of finitely-generated graded
$\shS$-modules. 

As a matter of fact, it is not hard to see that
\[
	\Gr^F \DR_X(\Mmod) \simeq \DR \bigl( \Gr^F \! \Mmod \bigr) = \DR(\shG).
\]
We can therefore obtain information about the associated graded of the de Rham
complex of $\Mmod$ by applying Lemma~\ref{lem:der4} to the complex $\DR(\shG)$. When
combined with Proposition~\ref{prop:derived}, the following result emerges.

\begin{proposition} \label{prop:deRham}
Let $M$ be a polarized Hodge module of weight $w = d + n$ on a nonsingular
$d$-dimensional complex algebraic variety, let $(\Mmod, F)$ be the underlying
filtered left $\Dmod$-module, and $\shG = \Gr^F \! \Mmod$ the associated graded
$\shS$-module. Let $\DR(\shG) = \Gr^F \DR_X(\Mmod)$ be the graded de Rham complex.
Then 
\[
	\DR(\shS) \tensorL_{\shS} \derL D \bigl( \shGr(-n) \bigr) \simeq \DR(\shG)
\]
in the derived category $\Dbgr(\shS)$ of graded, coherent $\shS$-modules.
\end{proposition}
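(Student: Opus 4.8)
The plan is to deduce the claimed isomorphism from the derived-category triangle of Proposition~\ref{prop:derived} by applying the exact de Rham functor and checking that it annihilates the ``global sections'' term. The functor $G \mapsto \DR(\shS) \tensorL_{\shS} G$ on $\Dbqcgr(\shS)$ is precisely the functor $\DR$ of \subsecref{subsec:graded-DR} — because $\DR(\shS)$ is a bounded complex of locally free graded $\shS$-modules, its terms being $\OmX{p} \tensor_{\OX} \shS(p)$ — so in particular $\DR(\shG) \simeq \DR(\shS) \tensorL_{\shS} \shG$. Applying it to the exact triangle
\ltriangle{\derL D \bigl( \shGr(-n) \bigr)}{\shG}{\derR \Gammast \bigl( \shC(\Mmod, F) \bigr)}
of Proposition~\ref{prop:derived} produces an exact triangle whose first term is the left-hand side of the assertion, whose middle term is $\DR(\shG)$, and whose third term is $\DR(\shS) \tensorL_{\shS} \derR \Gammast \bigl( \shC(\Mmod, F) \bigr)$. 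Everything thus reduces to showing that this third term is zero in $\Dbqcgr(\shS)$: the first arrow is then an isomorphism, and the resulting object automatically lies in $\Dbgr(\shS)$, since $\DR(\shG)$ does.

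The vanishing of that third term is the only point I expect to require a real argument. I would invoke Lemma~\ref{lem:der1} for $G = \shG$, together with the identification $\shC(\Mmod, F) = \shG_P$, to rewrite the third term as $\DR(\shS) \tensorL_{\shS} \derR \fl \derR \jl \ju \shG_E$, where $j \colon E - X \hookrightarrow E = T_X^{\ast}$ is the complement of the zero section. Since $\DR(\shS)_E$ is a perfect complex, the projection formula — first for $\fl$, then for $\jl$ — rewrites this as $\derR \fl \, \derR \jl \bigl( \ju \DR(\shS)_E \tensorL \ju \shG_E \bigr)$. But $\DR(\shS) \simeq \omegaX(d)$ in $\Dbgr(\shS)$ (recalled in \subsecref{subsec:graded-DR}), and on this module the positive-degree part of $\shS$ acts by zero; hence the complex $\DR(\shS)_E$ is supported on the zero section $X \subseteq E$, so its restriction $\ju \DR(\shS)_E$ to $E - X$ is zero, and the entire complex vanishes. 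Morally, this records the fact that the de Rham complex sees only the zero section of $T_X^{\ast}$, whereas $\derR \Gammast$ lives on its complement.

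A more direct variant, which bypasses Proposition~\ref{prop:derived} altogether, is to apply Lemma~\ref{lem:der4} to $G = \shG$: its left-hand side reads $\DR(\shS) \tensorL_{\shS} \derL D \, \derR \shHom_{\shS} \bigl( \shG, \omegaS \decal{d} \bigr)$, the Cohen-Macaulay property of $\shG$ (from \subsecref{subsec:CM}) collapses $\derR \shHom_{\shS} \bigl( \shG, \omegaS \decal{d} \bigr)$ to the single module $\shGd$, which by \eqref{eq:Saito-pol} equals $\shGr(-n)$, and the right-hand side of Lemma~\ref{lem:der4} is exactly $\DR(\shG)$. In either approach the only delicate part is the bookkeeping with grading shifts and with the two projection formulas; no computation beyond what is already set up in \subsecsref{subsec:loc-coh-vb}{subsec:graded-DR} is needed.
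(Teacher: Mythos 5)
Your ``more direct variant'' is exactly the argument the paper has in mind: apply Lemma~\ref{lem:der4} with $G = \shG$, use the Cohen-Macaulay property (Saito's theorem, \subsecref{subsec:CM}) to collapse $\derR\shHom_{\shS}\bigl(\shG, \omegaS\decal{d}\bigr)$ to the single module $\shGd$ concentrated in degree zero, and then invoke the polarization identification $\shGd \simeq \shGr(-n)$ from \eqref{eq:Saito-pol} to land on the stated isomorphism. Your first route --- applying the exact functor $\DR(\shS)\tensorL_{\shS}(-)$ to the triangle of Proposition~\ref{prop:derived} and killing the third term because $\ju\DR(\shS)_E \simeq 0$ --- is a valid repackaging of the same ingredients, with the vanishing you need being precisely the content of Lemma~\ref{lem:der3}; both versions are correct.
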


In particular, there is a convergent spectral sequence
\[
	E_1^{p,q} = \bigl( \OmX{p+d} \tensor_{\OX} \shS(p+d) \bigr) \tensor_{\shS} D^q \bigl( \shGr(-n) \bigr)
		\Longrightarrow \shH^{p+q} \bigl( \Gr^F \DR_X(\Mmod) \bigr)
\]
of graded $\shS$-modules; explicitly, the degree $k$ part is
\[
	\bigl( E_1^{p,q} \bigr)_k = 
		\shExt_{\OX}^q \bigl( \shG_{-w-k-p}, \OmX{p+d} \bigr)
		\Longrightarrow \shH^{p+q} \bigl( \Gr_k^F \DR_X(\Mmod) \bigr).
\]
The spectral sequence has a useful consequence for the de Rham complex of $\Mmod$; of
course, this type of result can also be proved directly from Saito's theory.
\begin{proposition}
Let $m \in \ZZ$ be such that $F_{-m} \Mmod = 0$. Then the inclusion
$F_{m-n-1} \DR_X(\Mmod) \subseteq \DR_X(\Mmod)$ is a filtered quasi-isomorphism.
\end{proposition}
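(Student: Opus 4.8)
The plan is to reduce the statement to the acyclicity of the graded pieces of the de Rham complex in high filtration degree, and to read that acyclicity off the spectral sequence obtained from Proposition~\ref{prop:deRham}. Recall first that a morphism of filtered complexes is a filtered quasi-isomorphism exactly when it induces a quasi-isomorphism on every associated graded piece. Equipping the subcomplex $F_{m-n-1}\DR_X(\Mmod)$ with the filtration induced from $\DR_X(\Mmod)$, the map $\Gr_k^F$ of the inclusion is the identity on $\Gr_k^F\DR_X(\Mmod)$ for $k \leq m-n-1$, and is the map $0 \to \Gr_k^F\DR_X(\Mmod)$ for $k \geq m-n$. Thus the inclusion is a filtered quasi-isomorphism if and only if $\Gr_k^F\DR_X(\Mmod)$ is acyclic for every $k \geq m-n$.

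To prove this acyclicity I would start from the observation that $\shG_j = \Gr_j^F\Mmod = 0$ whenever $j \leq -m$: indeed the Hodge filtration of a left $\Dmod$-module is increasing, so $F_{-m}\Mmod = 0$ forces $F_j\Mmod = 0$ for all $j \leq -m$. Then, using $\Gr^F\DR_X(\Mmod) \simeq \DR(\shG)$, I would invoke the spectral sequence following Proposition~\ref{prop:deRham},
\[
	(E_1^{p,q})_k = \shExt_{\OX}^q\bigl(\shG_{-w-k-p}, \OmX{p+d}\bigr)
		\Longrightarrow \shH^{p+q}\bigl(\Gr_k^F\DR_X(\Mmod)\bigr),
\]
where $w = d+n$. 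The sheaf $\OmX{p+d}$ is nonzero only when $-d \leq p \leq 0$; for such $p$ and any $k \geq m-n$ one has $-w-k-p = -(d+n)-k-p \leq -(d+n)-(m-n)+d = -m$, so that $\shG_{-w-k-p} = 0$ and hence $(E_1^{p,q})_k = 0$ for all $p$ and $q$. Convergence of the spectral sequence then gives $\shH^i\bigl(\Gr_k^F\DR_X(\Mmod)\bigr) = 0$ for every $i$ and every $k \geq m-n$, which is exactly what was needed.

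I do not expect a genuine obstacle here: the substantive input, namely the Cohen-Macaulay property of $\shG$ together with the resulting spectral sequence, is already in hand from Proposition~\ref{prop:deRham}, and what remains is the index count above. The one place to be careful is the bookkeeping of the shifts---the weight normalization $w = d+n$ and the range $-d \leq p \leq 0$ imposed by $\OmX{p+d} \neq 0$---since an off-by-one there would change the bound $m-n-1$. (One could instead argue directly from the isomorphism $\DR(\shS)\tensorL_{\shS}\derL D\bigl(\shGr(-n)\bigr) \simeq \DR(\shG)$ of Proposition~\ref{prop:deRham}, but the explicit spectral sequence makes the degree count most transparent.)
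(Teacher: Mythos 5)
Your proof is correct and follows essentially the same route as the paper: you reduce the filtered quasi-isomorphism to acyclicity of $\Gr_k^F \DR_X(\Mmod)$ for $k \geq m-n$, and then read that acyclicity off the spectral sequence following Proposition~\ref{prop:deRham} via the vanishing $\shG_{-w-k-p} = 0$ for $p \geq -d$ and $k \geq m-n$. Your index bookkeeping matches the paper's (the paper compresses it to the single extremal case $p=-d$, giving $-n-k \leq -m$); the only small addition in the paper is an explicit appeal to $F_\bullet\Mmod$ being a good filtration, which you implicitly use.
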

\begin{proof}
By assumption, $\shG_k = 0$ for all $k \leq -m$; in the spectral sequence, we
therefore have $\bigl( E_1^{p,q} \bigr)_k = 0$ for all $p, q \in \ZZ$, provided that
$-w-k+d = -n-k \leq -m$. This means that $\Gr_k^F \DR_X(\Mmod)$ is exact for $k \geq m-n$. 
Since $F_{\bullet} \Mmod$ is a good filtration on $\Mmod$, the assertion follows.
\end{proof}

To see what this means, let us suppose that $M$ is the intermediate extension of a
polarized variation of Hodge structure of weight $n$, which is ``geometric,''
meaning such that $F^{n+1} \Mmod = F_{-n-1} \Mmod = 0$. Then $F_0 \DR_X(\Mmod)
\subseteq \DR_X(\Mmod)$ is a filtered quasi-isomorphism by the proposition. This
implies, for instance, that any class in $\HH^{-d+1} \bigl( \DR_X(\Mmod) \bigr)$ can
be represented by a closed form $\omega \in \Gamma \bigl( X, \OmX{1} \tensor F_1
\Mmod \bigr)$. When applied to the $\Dmod$-module $\Mvan$ from
\subsecref{subsec:background}, this is useful in the study of normal functions as in
\cite{Schnell}.

\section{Notation used in the paper}

\subsection{Graded modules}

For $E \to X$ a vector bundle, and $\shE$ the corresponding locally free sheaf on
$X$, we set $\shS = \Sym \shEdual$. We define
\[
	\omegaS = \omegaX^{-1} \tensor_{\OX} \shS(-d).
\]
Let $\shG$ be a graded $\shS$-module. Its graded dual is the module
\[
	D(\shG) = \bigoplus_{k \in \ZZ} \shHom_{\OX} \bigl( \shG_{-k}, \OX \bigr),
\]
with $\shHom \bigl( \shG_{-k}, \OX \bigr)$ in degree $k$, and the module structure
given by the rule $(s \phi)(g) = \phi(s g)$. The involution $(-1) \colon E \to E$ allows us to define
\[
	\shGr = (-1)^{\ast} \shG;
\]
this is the same graded $\OX$-module as $\shG$, but with the $\shS$-module structure
changed so that sections of $\shS_k$ act with an extra factor of $(-1)^k$. When
$\shG$ is Cohen-Macaulay as an $\shS$-module, we let
\[
	\shGd = \shExt_{\shS}^d \bigl( \shG, \omegaS \bigr)
\]
be the dual $\shS$-module.


\begin{bibsection}
\begin{biblist}
\bib{Borel}{book}{
	label={Bor$^{+}$87},
   author={Borel, Armand},
   author={Grivel, Pierre-Paul},
   author={Kaup, B.},
   author={Haefliger, A.},
   author={Malgrange, B.},
   author={Ehlers, F.},
   title={Algebraic $D$-modules},
   series={Perspectives in Mathematics},
   volume={2},
   publisher={Academic Press Inc.},
   place={Boston, MA},
   date={1987},
}
\bib{BFNP}{article}{
	label={BFNP07},
   author={Brosnan, Patrick},
   author={Fang, Hao},
   author={Nie, Zhaohu},
   author={Pearlstein, Gregory},
   title={Singularities of admissible normal functions},
   eprint={arXiv:0711.0964},
	note={To appear in Inventiones Mathematicae},
}
\bib{Eisenbud}{book}{
   author={Eisenbud, David},
   title={The geometry of syzygies},
   series={Graduate Texts in Mathematics},
   volume={229},
   note={A second course in commutative algebra and algebraic geometry},
   publisher={Springer-Verlag},
   place={New York},
   date={2005},
}
\bib{EGA2}{article}{
   author={Grothendieck, Alexandre},
   title={\'El\'ements de g\'eom\'etrie alg\'ebrique. II. \'Etude globale
   \'el\'ementaire de quelques classes de morphismes},
   journal={Institut des Hautes \'Etudes Scientifiques. Publications Math\'ematiques},
   number={8},
   date={1961},
}
\bib{SGA2}{book}{
   author={Grothendieck, Alexandre},
   title={Cohomologie locale des faisceaux coh\'erents et th\'eor\`emes de
   Lefschetz locaux et globaux (SGA 2)},
   note={S\'eminaire de G\'eom\'etrie Alg\'ebrique du Bois-Marie, 1962},
   series={Advanced Studies in Pure Mathematics},
   volume={2},
   place={Amsterdam},
   date={1968},
}
\bib{SaitoHM}{article}{
   author={Saito, Morihiko},
   title={Modules de Hodge polarisables},
   journal={Publications of the Research Institute for Mathematical Sciences},
   volume={24},
   date={1988},
   number={6},
   pages={849--995},
}
\bib{SaitoMHM}{article}{
   author={Saito, Morihiko},
   title={Mixed Hodge modules},
   journal={Publications of the Research Institute for Mathematical Sciences},
   volume={26},
   date={1990},
   number={2},
   pages={221--333},
}
\bib{Saito-on}{article}{
	author={Saito, Morihiko},
	title={On the Theory of Mixed Hodge Modules},
   journal={American Mathematical Society Translations (Series 2)},
   volume={160},
   note={Translated from S\=ugaku},
   publisher={American Mathematical Society},
   place={Providence, RI},
   date={1994},
	pages={47--61},
}
\bib{Schnell}{thesis}{
  author={Schnell, Christian},
  title={The boundary behavior of cohomology classes and singularities of normal functions},
  type={Ph.~D.~thesis},
  organization={Ohio State University},
  date={2008},
  note={Available at \href{http://www.ohiolink.edu/etd/view.cgi?acc_num=osu1218036000}%
		{\texttt{OhioLink ETD}}},
}
\end{biblist}
\end{bibsection}

\end{document}